
\documentclass{article}
\usepackage[english]{babel}
\usepackage{epsfig}
\usepackage{amsmath}
\usepackage{amsthm}
\usepackage{amssymb}
\usepackage{color}

\newtheorem{theorem}{Theorem}[section]
\newtheorem{definition}[theorem]{Definition}
\newtheorem{lemma}[theorem]{Lemma}
\newtheorem{prop}[theorem]{Proposition}
\newtheorem{corollary}[theorem]{Corollary}
\newtheorem{remark}[theorem]{Remark}
\newtheorem{example}[theorem]{Example}

\newcommand{\hh}{{\mathbb{H}}}
\newcommand{\s}{{\mathbb{S}}}
\newcommand{\cc}{{\mathbb{C}}}
\newcommand{\rr}{{\mathbb{R}}}
\newcommand{\nn}{{\mathbb{N}}}

\newcommand{\an}{{\mathcal{A}}}
\newcommand{\hyp}{\mathcal{H}}

\title{\bf Power series and analyticity over the quaternions}

\author{Graziano Gentili\footnote{Partially supported by GNSAGA of the INdAM and by PRIN ``Propriet\`a geometriche delle variet\`a reali e complesse'' of the MIUR.}  \\ 
\normalsize Dipartimento di Matematica ``U. Dini'', Universit\`a di Firenze \\ 
\normalsize Viale Morgagni 67/A, 50134 Firenze, Italy,  gentili@math.unifi.it \\
\and Caterina Stoppato$^*$\footnote{Partially supported by PRIN ``Geometria Differenziale e Analisi Globale'' of the MIUR.} \\ 
\normalsize Dipartimento di Matematica ``U. Dini'', Universit\`a di Firenze \\ 
\normalsize Viale Morgagni 67/A, 50134 Firenze, Italy,  stoppato@math.unifi.it}

\date{  }
\begin{document}


\maketitle

\begin{abstract}
We study power series and analyticity in the quaternionic setting. We first consider a function $f$ defined as the sum of a power series $\sum_{n \in \nn} q^n a_n$ in its domain of convergence, which is a ball $B(0,R)$ centered at $0$. At each $p \in B(0,R)$, $f$ admits expansions in terms of appropriately defined \emph{regular power series centered at $p$}, $\sum_{n \in \nn} (q-p)^{*n} b_n$. The expansion holds in a ball $\Sigma(p, R-|p|)$ defined with respect to a (non-Euclidean) distance $\sigma$. We thus say that $f$ is \emph{$\sigma$-analytic} in $B(0,R)$. Furthermore, we remark that $\Sigma(p, R-|p|)$ is not always an Euclidean neighborhood of $p$; when it is, we say that $f$ is \emph{quaternionic analytic} at $p$. It turns out that $f$ is quaternionic analytic in a neighborhood of $B(0,R) \cap \rr$ that can be strictly contained in $B(0,R)$. 

We then relate these notions of anayticity to the class of  regular quaternionic functions  introduced in \cite{advances}, and recently extended in \cite{advancesrevised}. Indeed, $\sigma$-analyticity proves equivalent to regularity, in the same way as complex analyticity is equivalent to holomorphy. Hence the theory of regular quaternionic functions, which is quickly developing, reveals a new feature that reminds the nice properties of holomorphic complex  functions. 
\end{abstract}

\section{Introduction}\label{sectionintroduction}


Denote by $\hh$ the skew field of quaternions, obtained by endowing $\rr^4$ with the following multiplication operation: if $1,i,j,k$ denotes the standard basis, define $$i^2 = j^2 = k^2 = -1,$$  $$ ij = -ji = k, jk = -kj = i, ki = -ik = j,$$ let $1$ be the neutral element and extend the operation by distributivity to all quaternions $q = x_0 + x_1 i + x_2 j + x_3 k$.  If we define the \emph{conjugate} of such a $q$ as $\bar q = x_0 - x_1 i - x_2 j - x_3 k$, its \emph{real} and \emph{imaginary part} as $Re(q) = x_0$ and $Im(q) = x_1 i + x_2 j + x_3 k$ and its \emph{modulus} as $|q|= \sqrt{q\bar q} = \sqrt{Re(q)^2 + |Im(q)|^2}$, then the multiplicative inverse of each $q \neq 0$ is computed as 
$$q^{-1} = \frac{\bar q}{|q|^2}.$$

Polynomials and power series with coefficients in the skew field $\mathbb{H}$ are harder to handle than in the usual commutative setting. Indeed, if $q=x_0+i x_1+j x_2+ k x_3$, then it is easy to verify that
\begin{eqnarray}\label{variabilireali}
x_0=\frac{1}{4}(q-i qi-j qj-kqk),\ x_1=\frac{1}{4i}(q-i qi+j qj+kqk), \nonumber \\ 
x_2=\frac{1}{4j}(q+i qi-j qj+kqk),\ x_3=\frac{1}{4k}(q+i qi+j qj-kqk).
\end{eqnarray}
Hence the set of quaternionic polynomials in the quaternonic variable $q$, built as sums of monomials of type $\alpha_0 q \alpha_1 q ... \alpha_{n-1} q \alpha_n$, can be identified with that of $4$-tuples of polynomials in the four real variables $x_0,x_1,x_2,x_3$. This deprives such polynomials of the properties we are familiar with. For instance, some of these polynomials fail to have roots (see \cite{pumplun} or consider the polynomial $i q - q i -1$). For these reasons, the classic algebraic theory restricts to polynomials of the form $a_0 + a_1 q + \dots + a_n q^n$ (see e.g. \cite{lam}). This approach leads to the quaternionic analog of the fundamental theorem of algebra presented in \cite{niven} (later extended in \cite{eilenberg}) and to other nice properties. We adopt an analogous  point of view and consider (polynomials and) power series of the following type.

\begin{definition}
Let $\{a_n\}_{n \in \nn}$ be a sequence in $\hh$. We define 
\begin{equation}\label{centrozero}
f(q) = \sum_{n \in \nn} q^n a_n,
\end{equation}
as the \emph{regular power series} associated to $\{a_n\}_{n \in \nn}$.
\end{definition}

The reason for calling these series ``regular'' will become apparent in section \ref{sectionregularfunctions}. As stated in \cite{advances}, the analog of Abel's theorem holds.

\begin{theorem}\label{abel} 
For every regular power series $f(q) = \sum_{n\in \nn}q^n a_n$ there exists an $R \in [0, +\infty]$, called the \emph{radius of convergence} of $f(q)$, such that the series converges absolutely and uniformly on compact sets in $B(0,R) = \{q \in \hh : |q| <R\}$ and diverges in $\{q \in \hh : |q|>R\}$.
\end{theorem}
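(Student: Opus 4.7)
The plan is to transplant the classical Cauchy--Hadamard proof of Abel's theorem to the quaternionic setting. The one structural fact we need is that the quaternionic modulus is multiplicative, so $|q^n a_n| = |q|^n |a_n|$; this reduces every question about the convergence of $\sum_{n \in \nn} q^n a_n$ to the corresponding question about the non-negative real series $\sum_{n \in \nn} |q|^n |a_n|$, which is governed by the usual real-analytic root test.

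More concretely, I would set
\[
\frac{1}{R} \;:=\; \limsup_{n \to \infty} |a_n|^{1/n},
\]
with the standard conventions $1/0 = +\infty$ and $1/(+\infty) = 0$, and then establish the two assertions separately. For the divergence claim, if $|q| > R$ then
\[
\limsup_{n \to \infty} |q^n a_n|^{1/n} \;=\; |q| \limsup_{n \to \infty} |a_n|^{1/n} \;=\; |q|/R \;>\; 1,
\]
so the terms $q^n a_n$ fail to converge to $0$ and the series diverges.

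For the convergence assertion, given a compact set $K \subset B(0,R)$ I would pick $r_0 := \max_{q \in K} |q| < R$ and an intermediate $r$ with $r_0 < r < R$. From the definition of $R$ one finds an index $N$ with $|a_n| < r^{-n}$ for every $n \geq N$, whence for all $q \in K$ and $n \geq N$ we obtain the bound $|q^n a_n| \leq (r_0/r)^n$. Since $r_0/r < 1$, the Weierstrass $M$-test delivers absolute and uniform convergence of the series on $K$.

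There is no genuine obstacle in this argument: it is essentially the classical complex proof, with the multiplicativity $|ab| = |a||b|$ on $\hh$ doing all the work needed to sidestep non-commutativity. The only mild subtlety worth mentioning is that the coefficients stand on the right of $q^n$; this convention is immaterial here since it disappears under the modulus, though it will matter substantially later when products of regular series are considered.
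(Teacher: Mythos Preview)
Your argument is correct: the multiplicativity of the quaternionic modulus reduces everything to the classical Cauchy--Hadamard situation, and your handling of both the divergence and the uniform convergence on compacta is clean and complete. Note, however, that the paper does not actually supply a proof of this theorem; it merely states the result and attributes it to \cite{advances}, so there is no proof in the paper to compare against. Your proposal is exactly the standard proof one would expect for this statement.
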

As in the complex case, $R$ is defined by the formula 
$$\limsup_{n \to \infty} |a_n|^{1/n} = 1/R.$$
Thanks to theorem \ref{abel}, a regular power series having radius of convergence $R>0$ defines a $C^\infty$ function $f : B(0,R) \to \hh$. Such a function $f$ has very peculiar properties (see \cite{survey}): for instance, its zero set consists of isolated points or isolated 2-spheres. As the ring of polynomials (see \cite{lam}), the set of regular power series converging in $B(0,R)$ is endowed with the usual addition operation $+$ and the multiplicative operation $*$ defined by 
\begin{equation}\label{prodottostar}
\left(\sum_{n \in \nn} q^n a_n \right)*\left( \sum_{n \in \nn} q^n b_n \right) = \sum_{n \in \nn} q^n \sum_{k = 0}^n a_k b_{n-k},
\end{equation}
which we call \emph{regular multiplication}. Notice that these operations define a ring structure, as well as a structure of real algebra (see \cite{survey} and references therein).

Defining a notion of quaternionic analyticity is a delicate issue. A first attempt could be to consider expansions (at $p$) in terms of series of monomials of the type $\alpha_0 (q-p) \alpha_1 (q-p) ... \alpha_{n-1} (q-p) \alpha_n$, as done in \cite{hoshi}. However, due to equations (\ref{variabilireali}), this reduces the problem to that of analyticity in the four real variables $x_0,x_1,x_2,x_3$ (for details, see \cite{sudbery}). A second possibility is to consider series of monomials of the type $(q-p)^n \alpha$, but we are immediately discouraged by the following example.

\begin{example}
The simple monomial $q^2$ does not admit an expansion of the type 
$$q^2 = (q-p)^2 c_2 + (q-p) c_1 + c_0$$
at any point $p \in \hh \setminus \rr$. Indeed, such an equality would imply $q^2 = q^2 -qp - pq + p^2 + q c_1 - pc_1 +c_0$ and $q (c_1-p) = pq$ for all $q \in \hh$. This would yield $c_1-p  = p$ and $qp = pq$ for all $q \in \hh$, which is impossible. The aforementioned expansion can only be performed at points $x \in \rr\subset \hh$, where we get the usual formula $q^2 = (q-x)^2 + (q-x) 2x + x^2$ for all $q \in \hh$.
\end{example}

The difficulty arises in the previous example because $(q-p)^2$ is not a regular polynomial when $p \not \in \rr$. We can solve this problem using \emph{regular powers} of $q-p$ instead. Let $(q-p)^{*n} = (q-p)*\cdots *(q-p)$ denote the $n$-th power of the binomial $q-p$ with respect to the regular multiplication $*$.
Remark that $(q-x)^{*n} = (q-x)^{n}$ for all $x \in \rr$ and for all $n \in \nn$, while $(q-p)^{*n}$ and $(q-p)^{n}$ differ for $p \in \hh \setminus \rr$ and $n \geq 2$. A direct computation leads to the following version of the binomial theorem.

\begin{prop}\label{binomio}
Fix $p \in \hh$ and $m \in \nn$. Then $q^m = \sum_{k = 0}^m (q-p)^{*k} p^{m-k}\binom{m}{k}$.
\end{prop}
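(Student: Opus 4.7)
The plan is to reduce the identity to the ordinary binomial theorem by exploiting a commutativity phenomenon inside the $*$-algebra of regular polynomials. Observe first that both the regular polynomial $q-p$ (with coefficients $-p$ and $1$) and the constant $p$ have all their coefficients in the subalgebra $\rr[p] \subseteq \hh$, which is commutative (isomorphic to $\rr$ if $p \in \rr$, to $\cc$ otherwise). Since the coefficients of $f*g$ are obtained by the usual Cauchy convolution $c_n = \sum_{k=0}^n a_k b_{n-k}$ in $\hh$, whenever all coefficients of $f$ and $g$ mutually commute in $\hh$ we get $f*g = g*f$. In particular $(q-p)$ and $p$ commute in the $*$-algebra of regular polynomials; more generally, $(q-p)^{*k}$ and $p^{m-k}$ commute for every $k$, since their coefficients again lie in $\rr[p]$.

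With commutativity in hand, I would apply the classical binomial formula, valid in any (possibly non-commutative) ring as soon as the two summands commute, to the identity $q = (q-p) + p$ in the $*$-algebra:
$$q^{*m} = \bigl((q-p) + p\bigr)^{*m} = \sum_{k=0}^{m} \binom{m}{k}\, (q-p)^{*k} * p^{*(m-k)}.$$
Then I would simplify: the left-hand side equals the ordinary power $q^m$; on the right, $p^{*(m-k)}$ is the constant regular polynomial $p^{m-k}$, and $*$-multiplying a regular polynomial by a constant on the right is the same as multiplying each of its coefficients on the right by that constant. This reproduces the stated formula $q^m = \sum_{k=0}^m (q-p)^{*k}\, p^{m-k}\binom{m}{k}$.

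The only subtle point, which is the main obstacle, is the commutativity assertion $(q-p)^{*k} * p^{m-k} = p^{m-k} * (q-p)^{*k}$; once this is justified through the commutativity of the coefficient subring $\rr[p]$, the remainder is the classical binomial expansion. For readers preferring a more hands-on verification, one can instead compute $(q-p)^{*k} = \sum_{j=0}^{k} \binom{k}{j}\, q^j (-p)^{k-j}$ by an easy induction on $k$, substitute into the right-hand side of the proposition, swap the order of summation, use $(-p)^{k-j}p^{m-k} = (-1)^{k-j} p^{m-j}$, and collapse the inner sum via the standard identity $\sum_{k=j}^{m} \binom{m}{k}\binom{k}{j}(-1)^{k-j} = \delta_{jm}$; only the term $j=m$ survives, giving $q^m$.
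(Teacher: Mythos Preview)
Your argument is correct. The paper does not give a proof of this proposition at all; it simply states that ``a direct computation leads to the following version of the binomial theorem'' and moves on. Both of your approaches fit that description.

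Your main approach---observing that the coefficients of $q-p$ and of the constant $p$ all lie in the commutative subring $\rr[p]\subseteq\hh$, hence $(q-p)$ and $p$ commute under $*$, and then invoking the binomial formula for commuting elements in a ring---is clean and conceptually clear. The identifications $q^{*m}=q^m$, $p^{*(m-k)}=p^{m-k}$, and $(q-p)^{*k}*p^{m-k}=(q-p)^{*k}p^{m-k}$ (right $*$-multiplication by a constant equals pointwise right multiplication) are all straightforward from the definition of $*$ in the paper. Your alternative hands-on computation via $(q-p)^{*k}=\sum_{j=0}^{k}\binom{k}{j}q^{j}(-p)^{k-j}$ and the identity $\sum_{k=j}^{m}\binom{m}{k}\binom{k}{j}(-1)^{k-j}=\delta_{jm}$ is also correct and is presumably the sort of verification the authors had in mind when they wrote ``direct computation''.
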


We notice that proposition \ref{binomio} provides regular polynomials with expansions at an arbitrary point $p$. It also hints that regular power series $\sum_{n \in \nn} q^n a_n$ converging in $B(0,R)$ might expand at $p \in B(0,R)$ in terms of series of the following type.

\begin{definition}
Let $p \in \hh$ and let $\{a_n\}_{n \in \nn}$ be a sequence in $\hh$. We define
\begin{equation}\label{centrop}
\sum_{n \in \nn} (q-p)^{*n} a_n
\end{equation}
as the \emph{regular power series centered at $p$} associated to $\{a_n\}_{n \in \nn}$.
\end{definition}

We thus propose a possible notion of analyticity.

\begin{definition}\label{defanalyticity}
Let $\Omega$ be an open subset of $\hh$. A function $f : \Omega \to \hh$ is \emph{quaternionic analytic at $p \in \Omega$} if there exists a regular power series $ \sum_{n \in \nn} (q-p)^{*n} a_n$ converging in a neighborhood $U$ of $p$ in $\Omega$ such that $f(q) = \sum_{n \in \nn} (q-p)^{*n} a_n$ for all $q \in U$. We say that $f$ is \emph{quaternionic analytic} if it is quaternionic analytic at all $p \in \Omega$.
\end{definition}

The first question we have to answer is: where does a series of type (\ref{centrop}) converge? If the radius of convergence of the quaternionic power series defined by (\ref{centrozero}) is $R>0$, one might expect the series (\ref{centrop}) to converge for all $q$ in the open Euclidean ball $B(p, R)$ of center $p$ and radius $R$ in $\hh$. Surprisingly, this is not the case. 
We begin in section \ref{sectionestimate} with an estimate of $|(q-p)^{*n}|$ in terms of the function $\sigma$ defined as follows. If, for all $I \in \s = \{q \in \hh : q^2 = -1\}$, we denote $L_I = \rr + I \rr$ the complex line through $0,1$ and $I$ then 
$$
\sigma(q,p) = \left\{
\begin{array}{ll}
|q-p| & \mathrm{if\ } p,q \mathrm{\ lie\ on\ the\ same\ complex\ line\ } L_I\\
\omega(q,p) &  \mathrm{otherwise}
\end{array}
\right.
$$
where
$$
\omega(q,p) = \sqrt{\left[Re(q)-Re(p)\right]^2 + \left[|Im(q)| + |Im(p)|\right]^2}. 
$$
The map $\sigma$ proves to be a distance that is not topologically equivalent to the Euclidean distance (see section \ref{sectionsigma}). Actually, the topology induced by $\sigma$ in $\hh$ is finer than the Euclidean topology.
The sets of convergence of series of type (\ref{centrop}) turn out to be $\sigma$-balls centered at $p$ (see section \ref{sectionconvergence}). The shape of such sets, portrayed in figure 2, is quite curious but coherent with theorem 4.2  in \cite{advancesrevised}.
We then study the quaternionic analyticity of the sum $f$ of a series $f(q) = \sum_{n \in \nn} q^n a_n$ in section \ref{sectionanalyticity}. The main result of this section is the following.
\begin{theorem}
Any function defined by a regular power series $\sum_{n \in \nn}q^n a_n$ which converges in $B = B(0,R)$ is quaternionic analytic in the open set 
$$
\an(B) = \{p \in \hh : 2 |Im(p)| < R - |p|\}.
$$
\end{theorem}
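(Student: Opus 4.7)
The plan is to take for granted the $\sigma$-analyticity of $f$ established in the preceding sections: for each $p \in B(0,R)$ the series $\sum_{n \in \nn} q^n a_n$ can be rearranged via Proposition \ref{binomio} into a centered expansion $f(q) = \sum_{n \in \nn} (q-p)^{*n} b_n$ that converges throughout $\Sigma(p, R-|p|)$. With this in hand, quaternionic analyticity at $p$ in the sense of Definition \ref{defanalyticity} reduces to the purely geometric question: when does $\Sigma(p, R-|p|)$ contain an Euclidean neighborhood of $p$? I would first observe that $\an(B)$ is open, as the strict sublevel set of the continuous function $p \mapsto 2|Im(p)| + |p| - R$, and is contained in $B(0,R)$ since $2|Im(p)| < R - |p|$ forces $|p| < R$.

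For the core geometric claim, fix $p \in \an(B)$ and set $r = R - |p|$. When $p \in \rr$ there is nothing to do: any $q \in \hh$ lies with $p$ on a common complex line $L_I$ (take $I$ associated to $q$), so $\sigma(\cdot, p) = |\cdot - p|$ on all of $\hh$ and $\Sigma(p, r)$ coincides with the Euclidean ball $B(p, r)$. When $p \notin \rr$, let $I = Im(p)/|Im(p)|$ and consider $q$ with $|q-p| < \epsilon$. The inequalities
\begin{equation*}
|Re(q) - Re(p)| \leq |q-p| < \epsilon, \qquad \bigl| |Im(q)| - |Im(p)| \bigr| \leq |Im(q) - Im(p)| \leq |q-p| < \epsilon
\end{equation*}
yield, whenever $q \notin L_I$,
\begin{equation*}
\omega(q,p)^2 \leq \epsilon^2 + \bigl(2|Im(p)| + \epsilon\bigr)^2.
\end{equation*}
The defining inequality $2|Im(p)| < r$ of $\an(B)$ permits a choice of $\epsilon \in (0, r]$ small enough to force this upper bound strictly below $r^2$; this handles the case $q \notin L_I$. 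For $q \in L_I$ with $|q-p| < \epsilon \leq r$ one has $\sigma(q, p) = |q-p| < r$ automatically. Hence $B(p, \epsilon) \subset \Sigma(p, r)$, and the centered expansion witnesses quaternionic analyticity of $f$ at $p$.

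The main obstacle is the discontinuity between the two branches of $\sigma$: as $q$ approaches $p$ from outside $L_I$ in the Euclidean sense, $\omega(q, p)$ tends not to $0$ but to $2|Im(p)|$. This is precisely what makes $\sigma$ strictly finer than the Euclidean topology and dictates the appearance of the factor $2|Im(p)|$ in the definition of $\an(B)$. Once this phenomenon is accounted for, the verification is a routine continuity estimate, and the pointwise construction immediately yields analyticity on the entire open set $\an(B)$.
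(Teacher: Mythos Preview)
Your argument is correct and follows essentially the same route as the paper: invoke the expansion of Theorem~\ref{sviluppo} in $\Sigma(p,R-|p|)$, then reduce quaternionic analyticity at $p$ to the geometric question of when $p$ lies in the Euclidean interior of $\Sigma(p,R-|p|)$. The only difference is cosmetic: the paper quotes Remarks~\ref{chiave} and~\ref{chiave1} (the interior of $\Sigma(p,r)$ is $\Omega(p,r)$, and $p\in\Omega(p,r)$ iff $r>2|Im(p)|$), whereas you re-derive this inclusion by the explicit $\epsilon$-estimate on $\omega(q,p)$.
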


Since $\an(\hh) = \hh$, all quaternionic entire functions are quaternionic analytic in $\hh$. If $B \neq \hh$, the set $\an(B)$ is strictly contained in $B$. Furthermore, the previous theorem is sharp: we exhibit an example of regular power series which converges in $B$ and is quaternionic analytic in $\an(B)$ only. This suggests the definition of a weaker notion of analyticity.

\begin{definition}
Let $\Omega$ be a $\sigma$-open subset of $\hh$. A function $f : \Omega \to \hh$ is \emph{$\sigma$-analytic at $p \in \Omega$} if there exists a regular power series $ \sum_{n \in \nn} (q-p)^{*n} a_n$ converging in a $\sigma$-neighborhood $U$ of $p$ in $\Omega$ such that $f(q) = \sum_{n \in \nn} (q-p)^{*n} a_n$ for all $q \in U$. We say that $f$ is \emph{$\sigma$-analytic} if it is $\sigma$-analytic at all $p \in \Omega$.
\end{definition}

The notion of $\sigma$-analyticity produces the following result (to be compared to the complex case).

\begin{theorem}
A regular power series $\sum_{n \in \nn}q^n a_n$ having radius of convergence $R$ defines a $\sigma$-analytic function on $B(0,R)$.
\end{theorem}

Finally, in section \ref{sectionregularfunctions} we relate our study to that of \emph{slice regular functions}, quaternionic analogs of holomorphic functions that have been recently introduced (see \cite{advancesrevised, advances}). 

\begin{theorem}\label{analyticity2}
A quaternionic function is slice regular in a domain if, and only if, it is $\sigma$-analytic in the same domain. 
\end{theorem}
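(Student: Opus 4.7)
The plan is to prove the two implications in Theorem \ref{analyticity2} separately, using the machinery developed earlier in the paper together with the representation results for slice regular functions cited from \cite{advancesrevised}.

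First I would handle the direction ``$\sigma$-analytic $\Rightarrow$ slice regular''. The key reduction is to show that every centered series $\sum_{n \in \nn} (q-p)^{*n} a_n$ is itself slice regular on its $\sigma$-ball of convergence $\Sigma(p,\rho)$. Since $q - p$ is slice regular and the class of slice regular functions on a symmetric slice domain is closed under $*$-multiplication, the truncations $\sum_{n=0}^{N}(q-p)^{*n} a_n$ are slice regular. On each slice $L_I$ the intersection $\Sigma(p,\rho) \cap L_I$ is an open set on which, by the convergence results of Section \ref{sectionconvergence}, the partial sums converge uniformly on compacta. Since slice regularity on $L_I$ is just holomorphy of $f_I$ and uniform limits of holomorphic functions are holomorphic, the sum is slice regular on each slice, hence slice regular. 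Because $\sigma$-analyticity means that $f$ locally coincides with such a series on a $\sigma$-open set (which meets each slice in a Euclidean open set by the description of $\Sigma$-balls), $f$ is slice regular at every point of each slice, hence on $\Omega$.

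For the converse, ``slice regular $\Rightarrow$ $\sigma$-analytic'', fix $p \in \Omega$ and choose the imaginary unit $I_p = Im(p)/|Im(p)|$ (any $I \in \s$ if $p \in \rr$). The restriction $f_{I_p}$ is holomorphic on $\Omega \cap L_{I_p}$, so its Taylor expansion at $p$ produces coefficients $a_n \in \hh$ with $|a_n| \le M/\rho^n$ for some $\rho > 0$ and $M > 0$, where $\rho$ is controlled by the distance from $p$ to $\partial\Omega$ along the slice. Using the fundamental estimate $|(q-p)^{*n}| \le \sigma(q,p)^n$ from Section \ref{sectionestimate}, the series $\sum_{n}(q-p)^{*n} a_n$ converges absolutely and uniformly on compacta in $\Sigma(p,\rho)$, and by the first direction its sum $g$ is slice regular there. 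By construction, $g$ and $f$ agree on $\Omega \cap L_{I_p}$ near $p$ by the classical complex Taylor theorem. It remains to propagate this equality off $L_{I_p}$ to all of $\Sigma(p,\rho) \cap \Omega$.

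The main obstacle is precisely this last step: $\Sigma(p,\rho)$ is typically not a symmetric slice domain when $p \notin \rr$, so the identity principle from \cite{advancesrevised} does not apply naively. I would resolve this by passing to the symmetric completion of $\Sigma(p,\rho)$ (or, equivalently, exploiting the representation formula of \cite{advancesrevised}, which reconstructs the values of a slice regular function on any slice from its values on the slice $L_{I_p}$). Applying the representation formula to both $f$ and $g$ and using that they coincide on a neighborhood of $p$ in $L_{I_p}$ gives $f \equiv g$ throughout $\Sigma(p,\rho) \cap \Omega$, which is the required $\sigma$-analytic expansion. Combining the two implications yields Theorem \ref{analyticity2}.
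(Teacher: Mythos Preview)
Your argument is essentially correct and follows the same path as the paper, which derives the equivalence directly from Theorem~\ref{expansion} (for slice regular $\Rightarrow$ $\sigma$-analytic) and Theorem~\ref{regularseriesregularity} together with the slice restriction to $L_I$ (for $\sigma$-analytic $\Rightarrow$ slice regular).

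Two small points are worth flagging. First, your estimate $|(q-p)^{*n}| \le \sigma(q,p)^n$ misquotes Theorem~\ref{modulus}; the bound is $2\,\sigma(q,p)^n$, which of course does not affect convergence. Second, and more substantively, your ``main obstacle'' in the harder direction is not actually an obstacle. You say the identity principle does not apply naively because $\Sigma(p,\rho)$ is not a symmetric slice domain, and you route around this with the representation formula. But the paper simply applies the identity principle on $\Omega(p,\rho)$, which \emph{is} a symmetric slice domain whenever it is nonempty: by Remark~\ref{chiave}, $\Omega(p,\rho)\neq\emptyset$ exactly when $\rho>|Im(p)|$, in which case it meets the real axis and is symmetric by construction. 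Since $f_{I_p}\equiv g_{I_p}$ on $B_{I_p}(p,\rho)$, the identity principle on $\Omega(p,\rho)$ yields $f\equiv g$ there, and combined with the slice equality on $B_{I_p}(p,\rho)$ you get equality on all of $\Sigma(p,\rho)$. Your alternative via the representation formula is correct but is an unnecessary detour; the paper's route is shorter and uses only tools already stated in Section~\ref{sectionregularfunctions}.
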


It is important to point out that, in view of theorem \ref{analyticity2}, the notion of $\sigma$-analyticity, which naturally appears when studying the set of convergence of regular power series centered at $p\in \mathbb{H}$, turns out not to be a new notion. It is in fact equivalent to slice regularity. This equivalence urges a comparison with the classical case of complex holomorphic functions and gives a deeper insight into the theory of slice regular functions. This theory is developing well in different directions and it  has already been extensively studied in a series of papers which show its richness (see for example \cite{cauchy, zeros} and \cite{open}).  Slice regular functions have already proven their interest: for instance, they have applications to the theory of quaternionic linear operators (see \cite{bounded, unbounded, electronic, jga}).  
The achievements of this paper give a new viewpoint, and hence enrich, the theory of slice regular functions.

We conclude the paper by also investigate the stronger notion of quaternionic analyticity. Indeed, we are able to prove that

\begin{theorem}
All  regular functions on a domain $\Omega\subseteq \hh$ are quaternionic analytic in $\an(\Omega)= \{p \in \Omega : 2|Im(p)| < \sigma(p,\partial_\sigma \Omega)\}$, where $\partial_\sigma \Omega$ denotes the boundary of $\Omega$ with respect to the $\sigma$-topology. Moreover, if $\Omega$ intersects the real axis, then $\an(\Omega)$ contains a neighborhood of $\Omega\cap \rr$.
\end{theorem}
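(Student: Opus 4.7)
The plan is to combine three ingredients already available in the paper. First, by Theorem \ref{analyticity2} the slice regular function $f$ is $\sigma$-analytic on $\Omega$, so at each $p \in \Omega$ there is a regular power series expansion $f(q) = \sum_{n\in\nn}(q-p)^{*n} a_n$ valid on some $\sigma$-neighborhood of $p$. I would then argue, adapting the proof of the corresponding statement for $B=B(0,R)$ in section \ref{sectionanalyticity}, that the maximal $\sigma$-ball of convergence of this expansion is $\Sigma(p, \rho_p)$ with $\rho_p := \sigma(p, \partial_\sigma \Omega)$: since $\Omega$ is $\sigma$-open we have $\Sigma(p,\rho_p) \subseteq \Omega$, and the convergence of the expansion on this whole ball is extracted from the $\sigma$-analyticity of $f$ throughout it.

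The second ingredient is the purely geometric fact that $\Sigma(p,r)$ contains a Euclidean neighborhood of $p$ if and only if $r > 2|Im(p)|$. If $q_n \to p$ in the Euclidean sense with each $q_n$ off the complex line $L_I$ through $p$, the definition of $\sigma$ gives
\begin{equation*}
\sigma(q_n,p) = \omega(q_n,p) = \sqrt{(Re(q_n)-Re(p))^2 + (|Im(q_n)|+|Im(p)|)^2} \longrightarrow 2|Im(p)|,
\end{equation*}
so no Euclidean neighborhood of $p$ fits in $\Sigma(p,r)$ unless $r > 2|Im(p)|$. Conversely, the bound $\omega(q,p)^2 \leq |q-p|^2 + (2|Im(p)|+|q-p|)^2$ shows that when $r > 2|Im(p)|$ a sufficiently small Euclidean ball about $p$ is contained in $\Sigma(p,r)$. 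Applying this with $r = \rho_p$, the expansion of $f$ at $p$ converges in a Euclidean neighborhood precisely when $2|Im(p)| < \rho_p$, i.e. $p \in \an(\Omega)$, which by Definition \ref{defanalyticity} is exactly quaternionic analyticity at $p$.

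For the moreover assertion, fix $x \in \Omega \cap \rr$. Since $x$ lies on every $L_I$, one has $\sigma(x,q)=|x-q|$ for all $q$, so near $x$ the $\sigma$- and Euclidean topologies coincide; hence $\Omega$ contains a Euclidean ball $B(x,3\eta)$. A case-by-case check in the definition of $\sigma$ shows that $|p-x|<\eta$ and $\sigma(q,p)<\eta$ together force $|q-x|<3\eta$, so $\Sigma(p,\eta) \subseteq B(x,3\eta) \subseteq \Omega$ and thus $\rho_p \geq \eta$. If additionally $|p-x|<\eta/2$ then $2|Im(p)| \leq 2|p-x| < \eta \leq \rho_p$, placing $p$ in $\an(\Omega)$. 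Hence $B(x, \eta/2) \subseteq \an(\Omega)$, and taking the union over $x \in \Omega \cap \rr$ yields the desired Euclidean neighborhood.

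The hard part I expect is the first step, namely identifying the maximal $\sigma$-radius of convergence of the expansion at $p$ with $\rho_p$: the definition of $\sigma$-analyticity grants only \emph{some} $\sigma$-neighborhood of convergence, and upgrading this to the maximal one requires transposing to the $\sigma$-setting the complex-analytic principle that the radius of convergence equals the distance to the nearest singularity. Once this is in hand, the remaining steps are routine geometry of $\sigma$-balls.
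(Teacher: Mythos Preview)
Your second and third steps---the geometry of when $\Sigma(p,r)$ is a Euclidean neighborhood of $p$, and the ``moreover'' argument---are correct and match the paper's treatment (Remarks \ref{chiave}, \ref{chiave1} and the final Proposition).

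The gap is exactly where you flag it: the first step. Invoking Theorem \ref{analyticity2} only yields an expansion of $f$ on \emph{some} $\Sigma(p,r)$, and you never actually establish that it is valid on all of $\Sigma(p,\rho_p)$. Adapting the proof of Theorem \ref{sviluppo} will not work: that argument relies on the explicit coefficient formula (\ref{derivatives}) and the binomial identity of Proposition \ref{binomio}, both of which presuppose that $f$ is globally the sum of a single series $\sum q^n a_n$---information you do not have for a general regular $f$ on an arbitrary domain. And ``extracting'' convergence on the whole ball from $\sigma$-analyticity throughout $\Sigma(p,\rho_p)$ would itself require an identity/continuation argument you have not supplied; abstract $\sigma$-analyticity gives you a patchwork of local expansions, not a single one with the right radius.

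The paper sidesteps this by using Theorem \ref{expansion}, which is strictly stronger than what Theorem \ref{analyticity2} gives: it says directly that on \emph{any} $\Sigma(p,R)\subseteq\Omega$ the expansion $f(q)=\sum_{n}(q-p)^{*n}f^{(n)}(p)/n!$ holds. Its proof is short: restrict to the complex line $L_I$ through $p$, use the ordinary holomorphic Taylor expansion of $f_I$ on $B_I(p,R)$ to see that the series has $\sigma$-radius at least $R$, and then apply the Identity Principle \ref{identity} on the slice domain $\Omega(p,R)$ to identify the sum with $f$. With Theorem \ref{expansion} in hand, Corollary \ref{seidieci} is immediate by taking $R=\rho_p$, and your second step finishes. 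So the ``hard part'' you anticipate is already done in the paper; cite Theorem \ref{expansion} rather than its weaker corollary \ref{analyticity2}.
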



\section{Estimates of the regular powers of $q-p$}\label{sectionestimate}

In order to study the convergence of a regular power series $\sum_{n \in \nn} (q-p)^{*n}a_n$ centered at $p \in \hh$, we first estimate the growth of $|(q-p)^{*n}|$ as $n \in \nn$ approaches infinity. Define $P(q) = q-p$ for all $q \in \hh$ and recall a well-known algebraic property of $\hh$: for all $I \in \s = \{q \in \hh : q^2 = -1\}$, the subalgebra $L_I = \rr + I \rr$ of $\hh$ is isomorphic to $\cc$. In particular, if we choose $I$ such that $p \in L_I$ then $p$ commutes with any $z \in L_I$. As a consequence,
$$P^{*n}(z) = (z-p)^n$$
and $|P^{*n}(z)| = |z-p|^n$ for all $z \in L_I$. In order to estimate $|P^{*n}(q)|$ at a generic $q \in \hh$, we begin with the following.

\begin{definition}\label{sigma}
For all $p,q \in \hh$, we define 
\begin{equation}
\omega(q,p) = \sqrt{\left[Re(q)-Re(p)\right]^2 + \left[|Im(q)| + |Im(p)|\right]^2} 
\end{equation}
and
\begin{equation}
\sigma(q,p) = \left\{
\begin{array}{ll}
|q-p| & \mathrm{if\ } p,q \mathrm{\ lie\ on\ the\ same\ complex\ line\ } L_I\\
\omega(q,p) &  \mathrm{otherwise}
\end{array}
\right.
\end{equation}
\end{definition}

\begin{figure}\label{distanza}
  \begin{center}
  \fbox{\includegraphics[height=7cm]{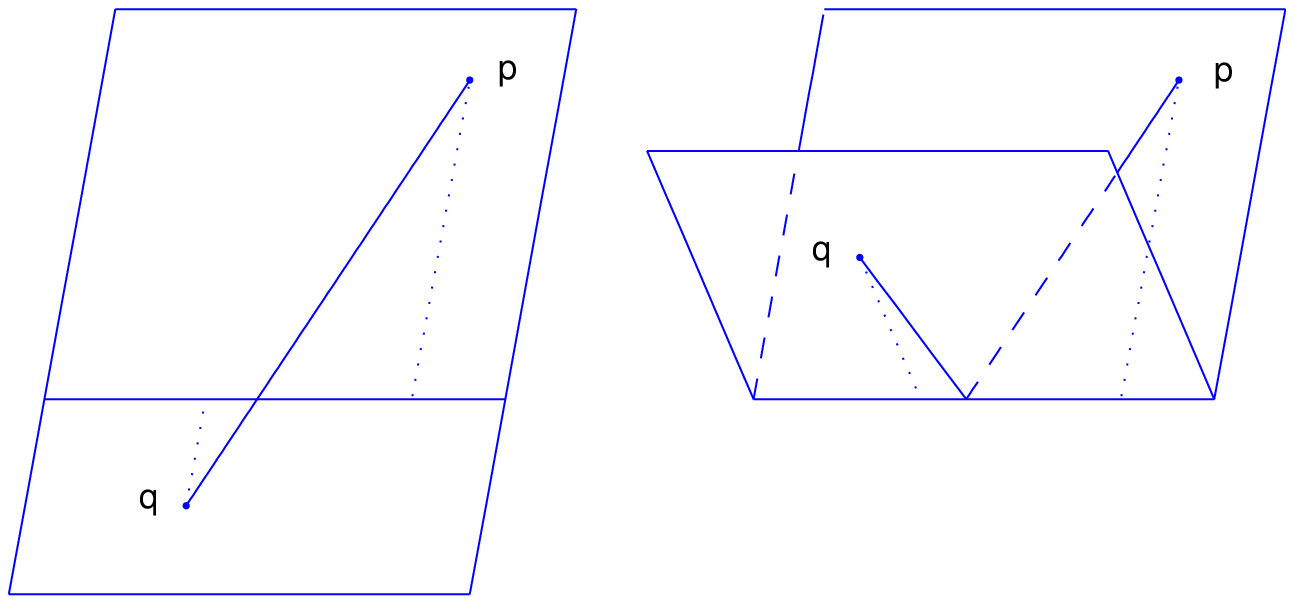}}
\end{center}
  \caption{Examples of ``how to compute'' $\sigma(q,p)$ when $p$ and $q$ lie in the same complex line and when they do not.}
\end{figure}

We then estimate $|(q-p)^{*n}|$ in terms of the map $\sigma$ defined above.

\begin{theorem}\label{modulus}
Fix $p \in \hh$. Then
\begin{equation}
|(q-p)^{*n}| \leq 2 \sigma(q,p)^n
\end{equation}
for all $n \in \nn$. Moreover,
\begin{equation}
\lim_{n \to \infty} |(q-p)^{*n}|^{1/n} = \sigma(q,p).
\end{equation}
\end{theorem}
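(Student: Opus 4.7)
The proof naturally splits into two cases. If $p$ and $q$ share a complex line $L_I$, then $p, q \in L_I \cong \cc$ commute, so the regular powers collapse to ordinary powers: $(q-p)^{*n} = (q-p)^n$ and $|(q-p)^{*n}| = |q-p|^n = \sigma(q,p)^n$, which settles both claims trivially. Henceforth assume $p, q$ lie on distinct complex lines; then neither is real, and we may write $p = u + vI$ with $v = |\mathrm{Im}(p)| > 0$ and $q = x + yJ$ with $y = |\mathrm{Im}(q)| > 0$, where necessarily $J \neq \pm I$.

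The central step is an explicit formula for $(q-p)^{*n}$. Either by induction on $n$ from the definition of $*$, or by applying the representation formula for slice regular functions to $q \mapsto (q-p)^{*n}$ (which is regular and agrees with $(z-p)^n$ on $L_I$), one obtains
$$(q-p)^{*n} = \frac{1 - JI}{2}\,A^n + \frac{1+JI}{2}\,B^n,$$
where $A = (x-u) + (y-v)I$ and $B = (x-u) - (y+v)I$ both lie in the commutative subalgebra $L_I$. A direct computation gives $|A|^2 = (x-u)^2 + (y-v)^2$ and $|B|^2 = (x-u)^2 + (y+v)^2 = \omega(q,p)^2$, hence $|A| \leq |B| = \sigma(q,p)$. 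The upper bound $|(q-p)^{*n}| \leq 2\sigma(q,p)^n$ is then immediate from the triangle inequality combined with $|1 \pm JI| \leq 1 + |JI| = 2$.

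For the limit, I compute $|(q-p)^{*n}|^2$ exactly. Setting $\lambda = \langle I, J \rangle \in (-1,1)$ (the real inner product on imaginary quaternions), the identity $JI = -\lambda + J \times I$ yields the key algebraic simplification $(1 - IJ)(1 + JI) = JI - IJ = 2(J \times I)$. Expanding $|(q-p)^{*n}|^2 = |P|^2 + |Q|^2 + 2\mathrm{Re}(\bar P Q)$ with $P = \tfrac{1-JI}{2}A^n$ and $Q = \tfrac{1+JI}{2}B^n$, the cross term therefore reduces to a multiple of $\mathrm{Re}\bigl(\overline{A^n}(J \times I)B^n\bigr)$. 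This vanishes because $J \times I$ is pure imaginary and orthogonal to $I$, while $\overline{A^n}, B^n \in L_I$ (a short check using $MI = -IM$ for $M = J \times I$). One is left with
$$|(q-p)^{*n}|^2 = \tfrac{1+\lambda}{2}|A|^{2n} + \tfrac{1-\lambda}{2}|B|^{2n} \geq \tfrac{1-\lambda}{2}|B|^{2n}.$$
Since $J \neq \pm I$ forces $1 - \lambda > 0$, taking $2n$-th roots gives $\liminf_n |(q-p)^{*n}|^{1/n} \geq |B| = \sigma(q,p)$, matching the $\limsup$ produced by the upper bound. The main obstacle is the cross-term cancellation: it is algebraically clean, but requires careful bookkeeping with the anticommutation relations among orthogonal imaginary units, together with the crucial fact that the regular-power structure forces $A^n$ and $B^n$ to remain inside the commutative slice $L_I$.
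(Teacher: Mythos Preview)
Your proof is correct and shares its starting point with the paper: both use the representation formula
\[
(q-p)^{*n}=\tfrac{1-JI}{2}\,(z-p)^n+\tfrac{1+JI}{2}\,(\bar z-p)^n
\]
(with $z=x+Iy$), and both derive the upper bound $|(q-p)^{*n}|\le 2\,\sigma(q,p)^n$ by the triangle inequality together with $|1\pm JI|\le 2$.

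The genuine difference lies in the limit computation. The paper proceeds more directly: writing $Q=(z-p)/(\bar z-p)$ with $|Q|<1$, it factors
\[
|(q-p)^{*n}|=\Bigl|\tfrac{1-JI}{2}\,Q^n+\tfrac{1+JI}{2}\Bigr|\,|\bar z-p|^n,
\]
observes $Q^n\to 0$ and $\tfrac{1+JI}{2}\neq 0$ (since $J\neq -I$), and reads off $\lim_n |(q-p)^{*n}|^{1/n}=|\bar z-p|=\sigma(q,p)$. Your route is more computational: you exploit the anticommutation of $M=J\times I$ with $I$ to kill the cross term in $|(q-p)^{*n}|^2$ and obtain the exact identity
\[
|(q-p)^{*n}|^2=\tfrac{1+\lambda}{2}\,|A|^{2n}+\tfrac{1-\lambda}{2}\,|B|^{2n},\qquad \lambda=\langle I,J\rangle,
\]
from which the limit follows by squeezing. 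Your approach costs more algebra but yields a sharper statement (an exact modulus formula, and in fact the improved bound $|(q-p)^{*n}|\le\sigma(q,p)^n$ in the off-slice case, since the two coefficients sum to~$1$); the paper's argument is shorter and avoids the cross-term bookkeeping entirely.
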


We will prove theorem \ref{modulus} by means of the following result, presented in \cite{open}. For all $x,y \in \rr$, denote $x+\s y = \{x+Iy : I \in \s\}$ (a 2-sphere if $y \neq 0$, a real singleton $\{x\}$ if $y = 0$).

\begin{theorem}\label{proprietageometrica}
Let $f(q) = \sum_{n \in \nn} q^n a_n$ have radius of convergence $R>0$ and let $x,y \in \rr$ be such that $x+\s y \subset B(0,R)$. There exist $b(x,y), c(x,y) \in \hh$ such that
\begin{equation}
    f(x+Iy) = b(x,y) + I c(x,y)
\end{equation}
for all $I \in \s$.
\end{theorem}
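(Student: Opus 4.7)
The plan is to establish the decomposition monomial by monomial and then sum, exploiting the fact that $x\in\rr$ commutes with everything, so $(x+Iy)^n$ can be expanded by the ordinary binomial theorem.

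First I would fix $x,y\in\rr$ and $I\in\s$, and expand
\begin{equation*}
(x+Iy)^n=\sum_{k=0}^{n}\binom{n}{k}x^{n-k}(Iy)^k=\sum_{k=0}^{n}\binom{n}{k}x^{n-k}y^k I^k,
\end{equation*}
which is valid because $x$ is real and hence central in $\hh$. Since $I^2=-1$, the powers $I^k$ reduce to $(-1)^m$ when $k=2m$ and to $(-1)^m I$ when $k=2m+1$. Splitting the sum into even and odd indices yields
\begin{equation*}
(x+Iy)^n=\alpha_n(x,y)+I\,\beta_n(x,y),
\end{equation*}
where
\begin{equation*}
\alpha_n(x,y)=\sum_{2m\le n}\binom{n}{2m}(-1)^m x^{n-2m}y^{2m},\qquad \beta_n(x,y)=\sum_{2m+1\le n}\binom{n}{2m+1}(-1)^m x^{n-2m-1}y^{2m+1}
\end{equation*}
are real numbers depending only on $x,y,n$ (not on the particular $I\in\s$ chosen). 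This is the crucial point: the real coefficients $\alpha_n,\beta_n$ are manifestly independent of $I$.

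Next I would multiply on the right by $a_n$. Because $\alpha_n(x,y),\beta_n(x,y)\in\rr$ commute with every quaternion, we get
\begin{equation*}
(x+Iy)^n a_n=\alpha_n(x,y)a_n+I\bigl(\beta_n(x,y)a_n\bigr).
\end{equation*}
To sum over $n$, note that $\alpha_n(x,y)+i\beta_n(x,y)=(x+iy)^n$ as complex numbers, so $|\alpha_n(x,y)|,|\beta_n(x,y)|\le (x^2+y^2)^{n/2}=|x+Iy|^n$. Since $x+\s y\subset B(0,R)$ we have $\sqrt{x^2+y^2}<R$, and Theorem \ref{abel} gives absolute convergence of $\sum_n (x^2+y^2)^{n/2}|a_n|$. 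Therefore
\begin{equation*}
b(x,y):=\sum_{n\in\nn}\alpha_n(x,y)a_n,\qquad c(x,y):=\sum_{n\in\nn}\beta_n(x,y)a_n
\end{equation*}
are well-defined quaternions, and rearranging the absolutely convergent series gives $f(x+Iy)=b(x,y)+I c(x,y)$ for every $I\in\s$, as desired.

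There is essentially no obstacle here: the only subtle point is checking that the splitting into a real part $\alpha_n$ and an ``$I$-imaginary'' part $I\beta_n$ really produces coefficients that are independent of $I$, and this is guaranteed because the reduction of $I^k$ uses only the relation $I^2=-1$, common to all $I\in\s$. Absolute convergence then allows one to interchange the sum over $n$ with the decomposition into the $1$- and $I$-components.
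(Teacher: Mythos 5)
Your proof is correct and follows essentially the same route as the paper, which (citing its source) indicates precisely the decomposition $(x+Iy)^n = \beta_n + I\gamma_n$ with real coefficients independent of $I$, and sets $b(x,y)=\sum_n \beta_n a_n$, $c(x,y)=\sum_n \gamma_n a_n$. Your write-up simply fills in the binomial expansion, the even/odd splitting via $I^2=-1$, and the absolute-convergence justification, all of which are the intended details.
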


Namely, $b(x,y) = \sum_{n \in \nn} \beta_n a_n$ and $c(x,y) = \sum_{n \in \nn} \gamma_n a_n$ where $\{\beta_n\}_{n \in \nn}$ and $\{\gamma_n\}_{n \in \nn}$ are real sequences such that $(x+Iy)^n = \beta_n + I \gamma_n$ for all $I \in \s$ and all $n \in \nn$.

\begin{corollary}
If the hypotheses of theorem \ref{proprietageometrica} hold, then
\begin{eqnarray}
f(x+Jy) = \frac{f(x+Iy) + f(x-Iy)}{2} + J I \frac{f(x-Iy) - f(x+Iy)}{2} =\\
= \frac{1-JI}{2} f(x+Iy) +  \frac{1+JI}{2} f(x-Iy) \nonumber
\end{eqnarray}
for all $I,J \in \s$.
\end{corollary}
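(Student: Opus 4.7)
The plan is to apply Theorem \ref{proprietageometrica} twice, with two carefully chosen imaginary units, in order to solve for the coefficients $b(x,y)$ and $c(x,y)$ in terms of values of $f$ on the $2$-sphere $x+\s y$. This requires only the observation that if $I \in \s$ then $-I \in \s$ as well (since $(-I)^2 = I^2 = -1$), so the formula from Theorem \ref{proprietageometrica} applies to both $I$ and $-I$.

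First, I would fix $I \in \s$ and apply Theorem \ref{proprietageometrica} at the units $I$ and $-I$, obtaining the two identities
\begin{equation*}
f(x+Iy) = b(x,y) + I\, c(x,y), \qquad f(x-Iy) = b(x,y) - I\, c(x,y).
\end{equation*}
Next, I would solve this $2 \times 2$ linear system for $b(x,y)$ and $c(x,y)$. Adding the two equations immediately yields
\begin{equation*}
b(x,y) = \tfrac{1}{2}\bigl[f(x+Iy) + f(x-Iy)\bigr],
\end{equation*}
while subtracting them gives $2 I\, c(x,y) = f(x+Iy) - f(x-Iy)$; multiplying on the left by $I^{-1} = -I$ and rearranging signs produces
\begin{equation*}
c(x,y) = I \cdot \tfrac{1}{2}\bigl[f(x-Iy) - f(x+Iy)\bigr].
\end{equation*}

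Finally I would apply Theorem \ref{proprietageometrica} once more, now with the unit $J \in \s$, to get $f(x+Jy) = b(x,y) + J\, c(x,y)$, and substitute in the expressions above. This immediately delivers the first displayed equality of the corollary. Regrouping the four summands so that $f(x+Iy)$ and $f(x-Iy)$ are each collected on the left, one reads off the factors $\tfrac{1-JI}{2}$ and $\tfrac{1+JI}{2}$, which gives the second displayed form. There is no real obstacle here: the argument is a direct linear-algebra computation whose only input is the representation provided by Theorem \ref{proprietageometrica}, together with the fact that $\s$ is stable under the map $I \mapsto -I$.
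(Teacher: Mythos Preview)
Your proof is correct and is exactly the natural derivation the paper has in mind; the paper itself states the corollary without proof, as an immediate consequence of Theorem \ref{proprietageometrica}, and your linear-algebra computation (applying the theorem at $I$, $-I$, and then $J$) is precisely that immediate consequence.
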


We are now ready to prove the announced result.

\begin{proof}[Proof of theorem \ref{modulus}]
Let $p \in L_I$ and $P(q) = q-p$, consider $P^{*n}(q)$ for $n \in \nn$. As we already mentioned, for all $z = x+Iy \in L_I$, $z$ and $p$ commute so that $P^{*n}(z) = (z-p)^n$ and $|P^{*n}(z)| = |z-p|^n = \sigma(z,p)^n$. 
For $q = x+Jy$ with $J \in \s \setminus \{\pm I\}$ we compute:
$$P^{*n}(q) = \frac{1-JI}{2} P^{*n}(z) +  \frac{1+JI}{2} P^{*n}(\bar z) =$$
$$=  \frac{1-JI}{2} (z-p)^n +  \frac{1+JI}{2} (\bar z-p)^n.$$
If $|z-p|<|\bar z-p|$ then $Q = \frac{z-p}{\bar z-p}$ has modulus $|Q| < 1$ and
$$|P^{*n}(q)| =  \left| \frac{1-JI}{2}Q^n +  \frac{1+JI}{2} \right| |\bar z-p|^n \leq 2 |\bar z-p|^n.$$
Moreover, we observe that $Q^n \to 0$ as $n \to \infty$ and $\frac{1+JI}{2}\neq 0$, so that
$$\lim_{n \to \infty} |P^{*n}(q)|^{1/n} =$$
$$= \lim_{n \to \infty} \left| \frac{1-JI}{2}Q^n +  \frac{1+JI}{2} \right|^{1/n} |\bar z-p| = |\bar z-p|.$$
If $|z-p|\geq|\bar z-p|$ then similar computations show that 
$$|P^{*n}(q)| \leq 2 |z-p|^n,$$
$$\lim_{n \to \infty} |P^{*n}(q)|^{1/n} = |z-p|.$$
Notice that
$$\max\{|z-p|, |\bar z-p|\}^2 =$$
$$= (Re(z)-Re(p))^2 +\max\{|-Im(z)-Im(p)|^2, |Im(z)-Im(p)|^2\} = $$
$$= (Re(z)-Re(p))^2 + (|Im(z)| + |Im(p)|)^2 =$$
$$= (Re(q)-Re(p))^2 + (|Im(q)| + |Im(p)|)^2 = \sigma(q,p)^2.$$
\end{proof}


\section{The map $\sigma$ is a distance}\label{sectionsigma}

Before going back to power series, let us study the properties of $\sigma$ and $\omega$. Notice that $|q-p| \leq \sigma(q,p) \leq \omega(q,p)$ for all $p,q \in \hh$. Moreover:

\begin{prop}
The function $\sigma : \hh \times \hh \to \rr$ is a distance.
\end{prop}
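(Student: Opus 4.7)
The plan is to verify symmetry, positivity, and the triangle inequality in turn. Symmetry is immediate, since $|q-p|$ and $\omega(q,p)$ are symmetric in $q,p$ and the relation ``$p$ and $q$ lie on the same complex line $L_I$'' is symmetric. For positivity, I would note that $|q-p| \leq \sigma(q,p)$ always and that $\omega(q,p) = 0$ forces $Re(q) = Re(p)$ together with $|Im(q)| = |Im(p)| = 0$, so in either case $\sigma(q,p) = 0$ implies $q = p$.

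The heart of the proof is the triangle inequality. First I would rewrite $\omega$ as a planar distance: setting $q^{+} = Re(q) + i|Im(q)| \in \cc$ and $q^{-} = \overline{q^{+}}$, a direct computation gives $\omega(q,p) = |q^{+} - p^{-}|$. The Euclidean triangle inequality in $\cc$,
\[
\omega(q,p) \leq |q^{+} - r^{+}| + |r^{+} - p^{-}|,
\]
together with the elementary bounds $|q^{+} - r^{+}| \leq \min\{|q-r|,\, \omega(q,r)\}$ and $|r^{+} - p^{-}| = \omega(r,p)$, then yields both the full triangle inequality $\omega(q,p) \leq \omega(q,r) + \omega(r,p)$ and the sharper estimates $\omega(q,p) \leq |q-r| + \omega(r,p)$ and (by a symmetric argument routed through $r^{-}$) $\omega(q,p) \leq \omega(q,r) + |r-p|$.

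With these planar estimates in hand, I would conclude by case analysis. Write $p \sim q$ for ``$p,q$ lie on a common complex line''; by definition, $\sigma(q,p) = |q-p|$ exactly when $q \sim p$, and $\sigma(q,p) = \omega(q,p)$ otherwise. If $q \sim p$, then $\sigma(q,p) = |q-p| \leq |q-r| + |r-p| \leq \sigma(q,r) + \sigma(r,p)$, using only $\sigma \geq |\cdot|$ and the Euclidean triangle inequality. If $q \not\sim p$, the planar estimates above cover every subcase except $q \sim r$ and $r \sim p$ simultaneously; but this forces $r \in \rr$, since $Im(r) \neq 0$ would give $Im(q) \parallel Im(r) \parallel Im(p)$, hence $q \sim p$. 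In that subcase $\sigma(q,r) + \sigma(r,p) = |q-r| + |r-p|$, and $\omega(q,p) \leq |q-r| + |r-p|$ follows from the triangle inequality in $\cc$ applied to the path $q^{+} \to (Re(r), 0) \to p^{-}$.

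The hard part will be the case analysis itself: the triangle inequality for $\omega$ alone does not suffice, since $\sigma$ may replace $\omega(q,r)$ by the strictly smaller $|q-r|$ whenever $q \sim r$. What makes the argument work is routing the Euclidean triangle inequality through $r^{+}$ rather than $r^{-}$, which bounds $|q^{+}-r^{+}|$ by $|q-r|$ and yields exactly the mixed estimate needed to absorb the loss.
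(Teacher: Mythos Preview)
Your proof is correct, and it takes a genuinely different route from the paper's.

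The paper fixes the complex line $L_I$ through $p$ and argues case by case according to whether $q$ and the middle point lie in $L_I$, in the line $L_J$ through $q$, or in neither. The key identity there is that for $q=x+Jy\notin L_I$ one has $\sigma(q,p)=\max\{|x+Iy-p|,|x-Iy-p|\}$, so every estimate reduces to the Euclidean triangle inequality inside $L_I$. Your approach instead projects all of $\hh$ to the closed upper half-plane via $q\mapsto q^{+}=Re(q)+i|Im(q)|$ and recognises $\omega(q,p)=|q^{+}-p^{-}|$ as an honest Euclidean distance in $\cc$. The mixed estimates $\omega(q,p)\le |q-r|+\omega(r,p)$ and $\omega(q,p)\le \omega(q,r)+|r-p|$ then follow uniformly by routing the planar triangle inequality through $r^{+}$ or $r^{-}$, using only the reverse triangle inequality $||Im(q)|-|Im(r)||\le |Im(q)-Im(r)|$.

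What your approach buys is a cleaner conceptual picture: once $\omega$ is seen as a planar distance, the triangle inequality for $\omega$ is automatic, and the passage from $\omega$ to $\sigma$ is isolated in a short case analysis on the relation $\sim$. The paper's approach buys directness: it never leaves the quaternionic picture and makes explicit the role of the ``shadow'' of $q$ in $L_I$, which is the same device used elsewhere in the paper (e.g.\ in the proof of Theorem~\ref{modulus}). Your handling of the residual case $q\sim r$, $r\sim p$, $q\not\sim p$ is correct: a non-real $r$ lies on a unique $L_K$, which would force $q,p\in L_K$ and hence $q\sim p$; so $r\in\rr$, whence $r^{+}=r^{-}=r$ and $\omega(r,p)=|r-p|$, and the mixed estimate already covers it.
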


\begin{proof} 
Definition \ref{sigma} yields that $\sigma(q,p) \geq 0$ for all $p, q \in \hh$ and that $\sigma(q,p) = 0$ if and only if $q = p$. It also immediately implies that $\sigma(q,p) = \sigma(p,q)$. The triangle inequality can be proven as follows. Let $L_I$ be a complex line through $p$.
If $q \in L_I$ then for all $o \in \hh$
$$\sigma(q,o)+\sigma(o,p) \geq |q-o|+|o-p| \geq |q-p| = \sigma(q,p)$$
as wanted.
Now suppose $q \in \hh \setminus L_I$, i.e. $q = x+Jy$ with $y>0, J \in \s \setminus \{\pm I\}$. For all $o \in L_I$ we have
$$\sigma(q,o)+\sigma(o,p) = \max\{|x+Iy -o|, |x-Iy - o|\}+|o-p| \geq$$
$$\geq \max\{|x+Iy -p|, |x-Iy - p|\} = \sigma(q,p).$$
For all $o \in L_J$, the triangle inequality is proven as above, by reversing the roles of $q$ and $p$. Finally, for $o \in \hh \setminus (L_I\cup L_J)$ we prove it as follows. Let $z \in L_I$ be such that $\sigma(o,p) = |z-p|$. Since $q,o$ and $z$ lie in three distinct complex lines, we can easily derive from definition \ref{sigma} that $\sigma(q,o) = \sigma(q,z)$. Hence
$$\sigma(q,o)+\sigma(o,p) = \sigma(q,z)+ |z-p| =$$
$$=\max\{|x+Iy -z|, |x-Iy - z|\}+|z-p| \geq$$
$$\geq \max\{|x+Iy -p|, |x-Iy - p|\} = \sigma(q,p),$$
as desired.
\end{proof}

Let us conclude this section studying the $\sigma$-balls 
$$\Sigma(p,R) = \{q \in \hh : \sigma(q,p) < R\}.$$ 
From $|q-p|\leq \sigma(q,p) \leq \omega(q,p)$, we derive 
$$\Omega(p,R)  = \{q \in \hh : \omega(q,p) < R\} \subseteq \Sigma(p,R) \subseteq B(p,R).$$ More precisely: 
\begin{remark}
If $p \in L_I \subset \hh$ and $B_I(p,R) = \{z \in L_I : |z-p| <R\}$ then
\begin{equation}
\Sigma(p,R) = \Omega(p,R) \cup B_I(p,R).
\end{equation}
\end{remark}
We are thus left with studying $\Omega(p,R)$. In \cite{open} we called a set $C \subseteq \hh$ \emph{(axially) symmetric} if all the 2-spheres $x+\s y$ which intersect $C$ are entirely contained in $C$. Remark that a symmetric set is completely determined by its intersection with a complex line $L_I$. 
\begin{remark}\label{chiave}
$\Omega(p,R)$ is the symmetric open set whose intersection with the complex line $L_I$ through $p$ is $B_I(p,R) \cap B_I(\bar p,R)$. In particular $\Omega(p,R)$ is empty when $R \leq |Im(p)|$, it intersects the real axis when $R > |Im(p)|$ and it includes $p$ if and only if $R > 2 |Im(p)|$.
\end{remark}

The fact that $\Omega(p,R)$ is symmetric  is proven observing that, when $p$ is fixed, $\omega(x+Jy,p)$ only depends on $x$ and $y$. Moreover, $\Omega(p,R) \cap L_I = B_I(p,R) \cap B_I(\bar p,R)$ because $\omega(z,p) =\max\{|z-p|,|\bar z -p|\}=\max\{|z-p|,|z -\bar p|\}$ for all $z \in L_I$.

\begin{figure}\label{palle}
  \begin{center}
  \fbox{\includegraphics[height=7cm]{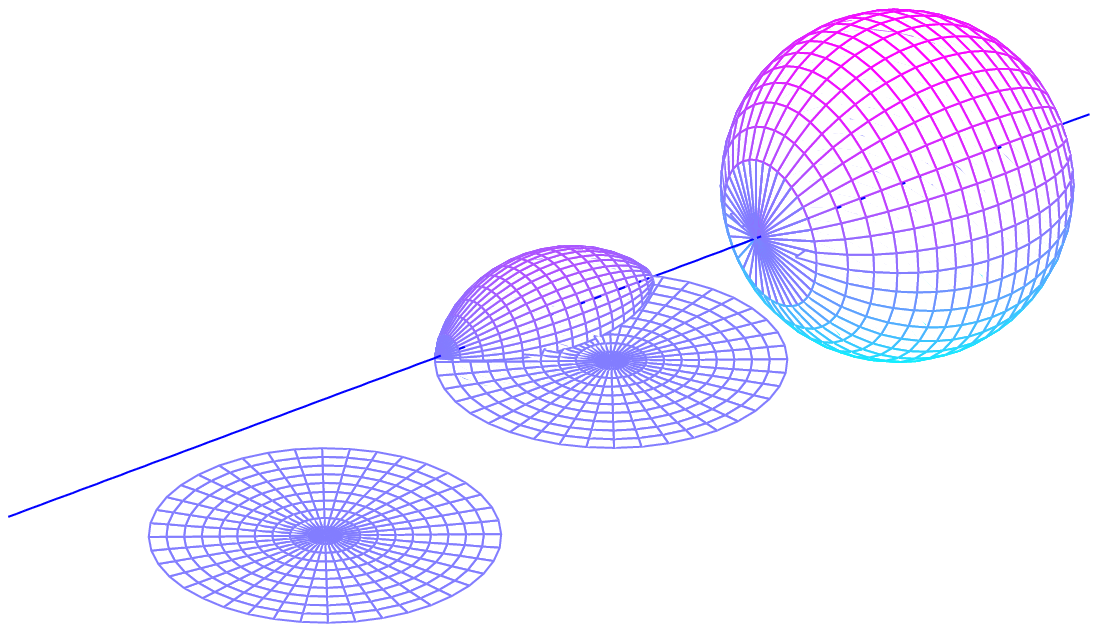}}
\end{center}
  \caption{A view in $\rr+i\rr+j\rr$ of $\sigma$-balls $\Sigma(p,R)$ centered at points $p \in L_i = \rr+i\rr$ and having $|Im(p)| \geq R$, $0 < |Im(p)| <R$ and $Im(p) = 0$, respectively.}
\end{figure}

We conclude this section remarking that 

\begin{remark}\label{chiave1}
$\Omega(p,R)$ is the interior of $\Sigma(p,R)$ and $\overline{\Sigma(p,R)} =  \{q \in \hh : \sigma(q,p) \leq R\}$ (with respect to the Euclidean topology).
\end{remark}


\section{Convergence of regular power series centered at $p$}\label{sectionconvergence}

The estimate given in section \ref{sectionestimate} and the study of the $\sigma$-balls conducted in section \ref{sectionsigma} allow the following result, which will be an important tool in the study of quaternionic analyticity.

\begin{theorem}\label{regularseriesconvergence}
Choose a sequence $\{a_n\}_{n \in \nn} \subset \hh$ and let $R \in [0, +\infty]$ be such that $1/R = \limsup_{n \to \infty} |a_n|^{1/n}$.
For all $p \in \hh$ the series 
\begin{equation}
f(q) = \sum_{n \in \nn} (q-p)^{*n} a_n
\end{equation}
converges absolutely and uniformly on the compact subsets of $\Sigma(p,R)$ and it does not converge at any point of $\hh \setminus \overline{\Sigma(p,R)}$. We call $R$ the \emph{$\sigma$-radius of convergence} of $f(q)$.
\end{theorem}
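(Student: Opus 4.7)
The plan is to reduce the convergence question for the noncommutative series $\sum (q-p)^{*n} a_n$ to a comparison with an ordinary real power series, by exploiting the two halves of Theorem \ref{modulus}. The first half, $|(q-p)^{*n}| \le 2\sigma(q,p)^n$, feeds into the Weierstrass M-test; the second, $\lim_{n\to\infty} |(q-p)^{*n}|^{1/n} = \sigma(q,p)$, feeds into an $n$-th term test.

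For uniform absolute convergence, I would fix a compact subset $K$ of $\Sigma(p,R)$, set $r_0 := \sup_{q \in K} \sigma(q, p)$, and argue that $r_0 < R$. I would interpret ``compact'' in the $\sigma$-topology here, since $\Sigma(p,R)$ is $\sigma$-open and $\sigma(\cdot, p)$, being a distance, is continuous with respect to $\sigma$; the supremum is then attained on $K$ and is strictly less than $R$. The first bound of Theorem \ref{modulus} yields $|(q-p)^{*n} a_n| \le 2 r_0^n |a_n|$ for every $q \in K$ and every $n$, and $\sum 2 r_0^n |a_n|$ converges by Theorem \ref{abel} (Cauchy--Hadamard) applied to $\sum q^n a_n$ at the real point $r_0 < R$. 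The M-test then closes out the convergence claim, and in particular shows that $f$ is well-defined pointwise on $\Sigma(p,R)$.

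For the divergence statement, take $q \in \hh$ with $\sigma(q, p) > R$. The second half of Theorem \ref{modulus} gives $|(q-p)^{*n}|^{1/n} \to \sigma(q, p)$, while the hypothesis on $\{a_n\}$ gives $\limsup_{n} |a_n|^{1/n} = 1/R$; combining these,
$$\limsup_{n \to \infty} \bigl(|(q-p)^{*n}|\,|a_n|\bigr)^{1/n} \ge \frac{\sigma(q,p)}{R} > 1,$$
so the general term does not tend to zero along a subsequence, and the series cannot converge.

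The only delicate point I anticipate is the topological one just noted: $\sigma(\cdot, p)$ fails to be continuous for the Euclidean topology precisely along the complex line $L_I$ through $p$ (since $\sigma$ drops from $\omega$ to $|\cdot-p|$ on $L_I$), so a Euclidean-compact subset of $\Sigma(p,R)$ could conceivably have $\sup_K \sigma(\cdot, p) = R$. Working in the finer $\sigma$-topology of Section \ref{sectionsigma} sidesteps this, and with that convention the whole proof reduces to a comparison argument requiring nothing beyond Theorem \ref{modulus} and the classical Cauchy--Hadamard formula.
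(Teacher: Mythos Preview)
Your argument is correct and matches the paper's proof: both halves rest on Theorem~\ref{modulus}, feeding the bound $|(q-p)^{*n}|\le 2\sigma(q,p)^n$ into the Weierstrass $M$-test for convergence and the limit $|(q-p)^{*n}|^{1/n}\to\sigma(q,p)$ into the root test for divergence. The paper sidesteps the compactness issue you raise by simply proving uniform convergence on each closed $\sigma$-ball $\overline{\Sigma(p,r)}$ with $r<R$, which avoids any discussion of which topology ``compact'' refers to; you might adopt the same device in place of your $\sigma$-compactness argument.
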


\begin{proof} In each set $\overline{\Sigma(p,r)}$ with $r<R$, the function series $\sum_{n \in \nn} (q-p)^{*n} a_n$ is dominated by the convergent number series $2 \sum_{n \in \nn} r^n |a_n|$ thanks to theorem \ref{modulus}. By the same theorem $\lim_{n \to \infty} |(q-p)^{*n}|^{1/n} = \sigma(q,p)$, so that
$$\limsup_{n \to \infty}  |(q-p)^{*n} a_n|^{1/n} = \frac{\sigma(q,p)}{R}.$$
Hence the series cannot converge at any point $q$ such that $\sigma(q,p)>R$. 
\end{proof}


\section{Analyticity of regular power series}\label{sectionanalyticity}

The algebra of regular power series centered at $0$ can be endowed with the following derivation.

\begin{definition}\label{derivative}
Let $f(q) = \sum_{n \in \nn} q^n a_n$ be a regular power series. Its \emph{(formal) derivative} is defined as the regular power series $f'(q) = \sum_{n \in \nn} q^n a_{n+1} (n+1)$.
\end{definition}

Remark that $f'(q)$ (which we sometimes denote as $f(q)'$) has the same radius of convergence as $f(q)$. A long computation proves Leibniz's rule:

\begin{prop}
For all regular power series $f(q)$ and $g(q)$ centered at $0$,
\begin{equation}
\left(f(q)*g(q)\right)' = f'(q)*g(q) + f(q)*g'(q).
\end{equation}
\end{prop}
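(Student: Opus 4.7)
The plan is to establish the identity by comparing coefficients of $q^n$ on both sides, since both $(f*g)'$ and $f'*g + f*g'$ are regular power series centered at $0$ and the representation of such a series by its coefficient sequence is unique. Crucially, no commutativity between the $a_k$ and $b_\ell$ will be invoked — the ordering in the $*$-product is preserved throughout — so the argument is essentially combinatorial bookkeeping on the indices.

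Concretely, write $f(q) = \sum_{n \in \nn} q^n a_n$ and $g(q) = \sum_{n \in \nn} q^n b_n$. Using definition (\ref{prodottostar}) and definition \ref{derivative}, the coefficient of $q^n$ in $(f*g)'$ is
$$(n+1)\sum_{k=0}^{n+1} a_k b_{n+1-k}.$$
On the other hand, applying definition \ref{derivative} first and then (\ref{prodottostar}), the coefficient of $q^n$ in $f'*g$ is $\sum_{k=0}^n (k+1)\, a_{k+1} b_{n-k}$ and the coefficient of $q^n$ in $f*g'$ is $\sum_{k=0}^n (n-k+1)\, a_k b_{n-k+1}$. Thus it suffices to verify the numerical identity
$$(n+1)\sum_{j=0}^{n+1} a_j b_{n+1-j} = \sum_{k=0}^n (k+1)\, a_{k+1} b_{n-k} + \sum_{k=0}^n (n-k+1)\, a_k b_{n-k+1}.$$
Reindexing the first sum on the right by $j = k+1$ turns it into $\sum_{j=1}^{n+1} j\, a_j b_{n+1-j}$; the second sum, with $j=k$, becomes $\sum_{j=0}^{n} (n-j+1)\, a_j b_{n+1-j}$. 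Adding them term by term and noting that the coefficient of $a_j b_{n+1-j}$ is $j + (n-j+1) = n+1$ in the overlapping range $1 \leq j \leq n$, while the boundary contributions $(n+1)a_0 b_{n+1}$ (from $j=0$) and $(n+1) a_{n+1} b_0$ (from $j=n+1$) are exactly what is needed, yields the claimed equality.

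The only delicate point is convergence: one must justify that the formal identity above actually defines equal holomorphic-in-the-sense-of-section~\ref{sectionanalyticity} functions on the common disk of convergence. This is immediate from the remark following definition \ref{derivative} — $f'$ has the same radius of convergence $R$ as $f$, and similarly for $g'$ — so by theorem \ref{regularseriesconvergence} all four series $f, g, f', g'$ converge absolutely and uniformly on compact subsets of $B(0,\min(R_f,R_g))$, and both $(f*g)'$ and $f'*g+f*g'$ are bona fide regular power series with the same coefficients. Since no step of the computation exploits commutativity of $\hh$, I do not anticipate any genuine obstacle — the proof is a one-line index manipulation, and the adjective ``long'' in the statement refers more to the amount of writing than to any conceptual difficulty.
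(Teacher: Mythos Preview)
Your argument is correct: the coefficient comparison is exactly the ``long computation'' the paper alludes to (the paper states the proposition without giving the details), and your reindexing $j=k+1$ in the $f'*g$ term together with the boundary check at $j=0$ and $j=n+1$ is clean and preserves the left--right order of the $a$'s and $b$'s as required. There is nothing to add.
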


As a consequence, the $n$-th derivative of $f(q) = \sum_{n \in \nn} q^n a_n$, which we denote $f^{(n)}$, can be expressed as:
\begin{equation}\label{derivatives}
f^{(n)}(q) = \sum_{m \in \nn} q^m a_{m+n} \frac{(m+n)!}{m!}.
\end{equation}
In particular, $f^{(n)}(0) = a_n n!$ for all $n \in \nn$, so that
$$f(q) = \sum_{n \in \nn} q^n f^{(n)}(0) \frac{1}{n!}.$$
At this point, it is natural to study the analyticity of $f$. We will need the following estimate, which can be proven as in the complex case.

\begin{lemma}\label{coefficientestimate}
If $f(q)$ has radius of convergence $R>0$ and $p \in B(0,R)$ then
\begin{equation}
\limsup \left|f^{(n)}(p) \frac{1}{n!}\right|^{1/n} \leq \frac{1}{R-|p|}.
\end{equation}
\end{lemma}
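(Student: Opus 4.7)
The plan is to use the explicit formula for derivatives given in equation (\ref{derivatives}) combined with a standard majorant argument, much as in the complex case. From (\ref{derivatives}) we have
$$\frac{f^{(n)}(p)}{n!} = \sum_{m \in \nn} p^m\, a_{m+n} \binom{m+n}{m},$$
and the series on the right converges absolutely at $p$ because $f^{(n)}$ has the same radius of convergence $R$ as $f$ and $p\in B(0,R)$.

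Next, I would fix an auxiliary radius $r$ with $|p|<r<R$. Since $\limsup |a_k|^{1/k}=1/R<1/r$, the sequence $\{|a_k|r^k\}$ is bounded, so there exists a constant $M>0$ (independent of $n$ and $k$) such that $|a_k|\leq M/r^k$ for every $k\in\nn$. Applying the triangle inequality and this bound to the series above yields
$$\left|\frac{f^{(n)}(p)}{n!}\right| \leq \sum_{m\in\nn} |p|^m \frac{M}{r^{m+n}}\binom{m+n}{m} = \frac{M}{r^n}\sum_{m\in\nn}\binom{m+n}{m}\!\left(\tfrac{|p|}{r}\right)^m.$$
The key analytic input is the classical identity $\sum_{m\in\nn}\binom{m+n}{m}x^m = (1-x)^{-(n+1)}$ valid for $|x|<1$, which I apply with $x=|p|/r<1$ to obtain
$$\left|\frac{f^{(n)}(p)}{n!}\right| \leq \frac{M}{r^n}\cdot\frac{1}{(1-|p|/r)^{n+1}} = \frac{Mr}{(r-|p|)^{n+1}}.$$

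Taking $n$-th roots and passing to $\limsup$, the factors $(Mr)^{1/n}$ and $(r-|p|)^{-1/n}$ tend to $1$, so
$$\limsup_{n\to\infty}\left|\frac{f^{(n)}(p)}{n!}\right|^{1/n} \leq \frac{1}{r-|p|}.$$
Since $r\in(|p|,R)$ was arbitrary, letting $r\to R^-$ gives the desired bound $1/(R-|p|)$. No step is genuinely an obstacle here: the one spot that requires care is verifying that the termwise bound $|a_k|\leq M/r^k$ is valid uniformly in $k$ (not merely eventually), which is guaranteed because a finite number of initial terms can always be absorbed into the constant $M$.
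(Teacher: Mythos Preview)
Your argument is correct and is precisely the standard complex-analysis proof the paper defers to when it says the lemma ``can be proven as in the complex case''; the only quaternionic ingredient needed is the multiplicativity of the modulus, $|p^m a_{m+n}|=|p|^m|a_{m+n}|$, which you use implicitly in the triangle-inequality step. Since the paper gives no further details, there is nothing to compare.
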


We are now ready to exhibit expansions of $f$ at any point $p \in B(0,R)$.

\begin{theorem}\label{sviluppo}
Let $f(q) = \sum_{n \in \nn} q^n a_n$ have radius of convergence $R>0$ and let $p \in B(0,R)$. Then
\begin{equation}\label{equationsviluppo}
    f(q) = \sum_{n \in \nn} (q-p)^{*n} f^{(n)}(p) \frac{1}{n!}
\end{equation}
for all $q \in \Sigma(p,R-|p|)$.
\end{theorem}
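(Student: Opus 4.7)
The plan is to obtain the expansion by substituting the binomial formula of Proposition \ref{binomio} into the series $\sum_m q^m a_m$ and then exchanging the order of summation. Fix $q \in \Sigma(p,R-|p|)$, so that $\sigma(q,p) < R-|p|$ and in particular $\sigma(q,p)+|p| < R$. By Proposition \ref{binomio},
\[
q^m a_m = \sum_{k=0}^{m} (q-p)^{*k}\, p^{m-k}\binom{m}{k} a_m,
\]
so at least formally
\[
f(q) = \sum_{m \in \nn}\sum_{k=0}^{m} (q-p)^{*k}\, p^{m-k}\binom{m}{k} a_m = \sum_{k \in \nn} (q-p)^{*k} \sum_{m \geq k} p^{m-k}\binom{m}{k} a_m.
\]
By formula (\ref{derivatives}), the inner sum on the right equals $f^{(k)}(p)/k!$, which yields (\ref{equationsviluppo}).

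The key step is to justify the interchange of summation. First I would bound the terms of the double series in absolute value: by Theorem \ref{modulus} we have $|(q-p)^{*k}| \leq 2\sigma(q,p)^k$, hence
\[
\sum_{m \in \nn} \sum_{k=0}^{m} \left|(q-p)^{*k}\, p^{m-k}\binom{m}{k} a_m\right| \leq 2 \sum_{m \in \nn} |a_m|\sum_{k=0}^{m} \binom{m}{k}\sigma(q,p)^k |p|^{m-k} = 2 \sum_{m \in \nn} |a_m| \bigl(\sigma(q,p)+|p|\bigr)^m.
\]
Since $\sigma(q,p)+|p| < R$, this last series converges by the definition of the radius of convergence. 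Therefore Fubini's theorem (applied to the counting measure on $\nn \times \nn$) legitimates the rearrangement carried out above.

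Finally, I would observe that the series on the right-hand side of (\ref{equationsviluppo}) actually defines a regular power series centered at $p$ whose set of convergence is known. Indeed, Lemma \ref{coefficientestimate} gives $\limsup_n |f^{(n)}(p)/n!|^{1/n} \leq 1/(R-|p|)$, so by Theorem \ref{regularseriesconvergence} the series converges on all of $\Sigma(p,R-|p|)$; thus the identity (\ref{equationsviluppo}), proved above for the fixed point $q$, holds throughout $\Sigma(p,R-|p|)$. The main obstacle is really just the Fubini step, and it is handled cleanly by the inequality $|(q-p)^{*k}| \leq 2\sigma(q,p)^k$ combined with the sharp comparison $\sigma(q,p) + |p| < R$ dictated by the shape of $\Sigma(p,R-|p|)$.
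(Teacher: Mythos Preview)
Your proof is correct and follows essentially the same route as the paper's: both rely on Proposition \ref{binomio}, formula (\ref{derivatives}), and the estimate $|(q-p)^{*k}|\leq 2\sigma(q,p)^k$ to rearrange the same double sum, with the only difference that you run the computation from $f(q)$ toward the expansion while the paper starts from the expansion and collapses back to $f(q)$. Your explicit Fubini justification via $2\sum_m |a_m|(\sigma(q,p)+|p|)^m<\infty$ is exactly the absolute-convergence check the paper leaves implicit.
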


\begin{proof}
Thanks to theorem \ref{regularseriesconvergence} and to lemma \ref{coefficientestimate}, the series on the right hand side of equation (\ref{equationsviluppo}) converges absolutely in $\Sigma(p,R-|p|)$. Applying formula (\ref{derivatives}) and lemma \ref{binomio}, we compute
$$\sum_{k \in \nn} (q-p)^{*k} f^{(k)}(p) \frac{1}{k!} = \sum_{k,n \in \nn} (q-p)^{*k} p^n a_{n+k} \frac{(n+k)!}{n!} \frac{1}{k!} =$$
$$= \sum_{m \in \nn} \left [\sum_{k = 0}^m (q-p)^{*k} p^{m-k}\binom{m}{k}\right] a_m = \sum_{m \in \nn} q^m a_m = f(q)$$ for all $q \in \Sigma(p,R-|p|)$.
\end{proof}

As a consequence, according to definition \ref{defanalyticity}, the function $f$ is quaternionic analytic at each $p \in B(0,R)$ such that $p$ lies in the interior of $\Sigma(p, R-|p|)$. Recalling that the interior of $\Sigma(p, R-|p|)$ is the set which we called $\Omega(p,R-|p|)$ and that the latter includes $p$ if and only if $R-|p|> 2 |Im(p)|$, we get the following result.

\begin{theorem}\label{quatanalyticity}
Any function defined by a regular power series $\sum_{n \in \nn}q^n a_n$ which converges in $B = B(0,R)$ is quaternionic analytic in the open set 
\begin{equation}
\an(B) = \{p \in \hh : 2 |Im(p)| < R - |p|\}.
\end{equation}
\end{theorem}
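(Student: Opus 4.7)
The plan is to combine Theorem \ref{sviluppo} with the geometric description of $\sigma$-balls given in Section \ref{sectionsigma}. First, I would observe that if $p \in \an(B)$ then in particular $2|Im(p)| < R - |p|$ forces $|p| < R$ (since $|Im(p)| \geq 0$), so $p \in B(0,R)$ and Theorem \ref{sviluppo} applies: the regular power series centered at $p$
\[
\sum_{n \in \nn} (q-p)^{*n} f^{(n)}(p)\frac{1}{n!}
\]
represents $f$ on the entire $\sigma$-ball $\Sigma(p, R-|p|)$.

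Next, I would invoke Remark \ref{chiave1}, which tells us that the Euclidean interior of $\Sigma(p, R-|p|)$ is exactly $\Omega(p, R-|p|)$. By Remark \ref{chiave}, the symmetric open set $\Omega(p,r)$ contains $p$ if and only if $r > 2|Im(p)|$. Applied with $r = R-|p|$, this condition becomes $R - |p| > 2|Im(p)|$, i.e.\ exactly $p \in \an(B)$. Consequently, for $p \in \an(B)$, the set $\Omega(p, R-|p|)$ is a genuine Euclidean open neighborhood of $p$ on which the centered expansion converges to $f$. By Definition \ref{defanalyticity}, this is precisely quaternionic analyticity at $p$.

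Finally, to justify calling $\an(B)$ an open set, I would note that the real-valued function $p \mapsto R - |p| - 2|Im(p)|$ is continuous on $\hh$ in the Euclidean topology (both $|\cdot|$ and $|Im(\cdot)|$ are continuous), hence $\an(B)$, being the preimage of the open ray $(0,+\infty)$, is open. Since $p$ was an arbitrary point of $\an(B)$, this shows that $f$ is quaternionic analytic on $\an(B)$.

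I do not expect any genuine obstacle here: the theorem is essentially a packaging of Theorem \ref{sviluppo} with the already established shape of $\Sigma(p,r)$, and the only mildly subtle point is recognizing that the Euclidean‐openness condition in Definition \ref{defanalyticity} translates, via Remark \ref{chiave}, into the explicit numerical inequality $2|Im(p)| < R - |p|$.
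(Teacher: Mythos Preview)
Your proposal is correct and follows essentially the same approach as the paper: the paper derives the theorem as an immediate consequence of Theorem \ref{sviluppo}, observing (via Remarks \ref{chiave1} and \ref{chiave}) that $p$ lies in the Euclidean interior $\Omega(p,R-|p|)$ of $\Sigma(p,R-|p|)$ if and only if $R-|p|>2|Im(p)|$. Your additional remarks that $p\in\an(B)$ forces $|p|<R$ and that $\an(B)$ is Euclidean-open by continuity of $p\mapsto R-|p|-2|Im(p)|$ are correct and merely make explicit what the paper leaves implicit.
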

We remark that $\an(B)\subset B$ is the (open) region bounded by the hypersurface consisting of all points $p \in B$ such that $x = Re(p)$ and $y=|Im(p)|$ verify
\begin{equation}\label{primaiperbole}
x^2 -3 \left(y- \frac{2}{3}R \right)^2 + \frac{R^2}{3} = 0.
\end{equation}
In other words, if for any $I\in \s$ we consider the arc of hyperbola $\hyp(I)=\{ x+Iy \in L_I : 0\leq y < R, \ x, y\ \mathrm{verify\ (\ref{primaiperbole})} \},$ then the boundary of $\an(B)$ is the hypersurface of revolution generated  rotating $\hyp(I)$ around the real axis as follows:
$$
\partial \an(B) = \bigcup_{J\in \s} \hyp(J).
$$
The previous result is quite surprising, if compared to the analogous result for complex power series. Even more surprisingly, it is sharp: the following example proves that some regular power series with radius of convergence $R$ are quaternionic analytic in $\an(B)$ only. Recall that, as explained in \cite{remmert}, the complex series $\sum_{n \in \nn} z^{2^n}$ converges in the open unit disc of $\cc$ and it does not extend to a holomorphic function near any point of the boundary.

\begin{example}
The sum of the quaternionic series $f(q) = \sum_{n \in \nn} q^{2^n}$, which converges in $B = B(0,1)$, is quaternionic analytic in $\an(B)$ only. Indeed, suppose $f$ were quaternionic analytic at $p \in B \setminus \an(B)$ (i.e. at $p \in B$ with $2 |Im(p)| \geq 1 -|p|$). There would exist a $\sigma$-ball $\Sigma(p,r)$ of radius $r > 2 |Im(p)|$ and a regular power series $\sum_{n \in \nn} (q-p)^{*n} a_n$ converging in $\Sigma(p,r)$ and coinciding with $f(q)$ for all $q \in B(0,1) \cap \Sigma(p,r)$. The coefficients $a_n$ would have to lie in the complex line $L_I$ through $p$, because $f(B_I(0,1)) \subseteq L_I$ (using a slight variation of corollary 2.8 in \cite{advances}). Restricting to $L_I = \rr+I\rr \simeq \cc$, we would have $f(z) = \sum_{n \in \nn} (z-p)^n a_n$ for all $z \in B_I(0,1) \cap B_I(p,r)$, with $B_I(p,r)$ not contained in the unit disc $B_I(0,1)$ because $r>2 |Im(p)|\geq1-|p|$. The sum of the series $\sum_{n \in \nn} (z-p)^n a_n$ in $B_I(p,R)$ would thus extend $\sum_{n \in \nn} z^{2^n}$ near some point of the boundary of $B_I(0,1)$, which is impossible.
\end{example}

The only case when we immediately conclude quaternionic analyticity on the whole domain is the following.

\begin{theorem}
A \emph{quaternionic entire} function, i.e. the sum of a power series $\sum_{n \in \nn} q^n a_n$ converging in $\hh$, is always quaternionic analytic in $\hh$.
\end{theorem}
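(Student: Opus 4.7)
The plan is to derive the statement as an immediate corollary of Theorem \ref{quatanalyticity}. First I would observe that the hypothesis that $\sum_{n \in \nn} q^n a_n$ converges on all of $\hh$ means precisely that its radius of convergence is $R = +\infty$, so that $B = B(0,R) = \hh$.

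Next I would verify that in this limiting case $\an(\hh) = \hh$: for any $p \in \hh$ the defining inequality $2|Im(p)| < R - |p|$ degenerates to $2|Im(p)| < +\infty$, which is trivially satisfied. Hence Theorem \ref{quatanalyticity} delivers quaternionic analyticity of $f$ at every point of $\hh$.

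A more transparent route, which I would probably prefer to write out, bypasses $\an$ altogether and invokes Theorem \ref{sviluppo} directly: at any $p \in \hh$ the expansion
\[ f(q) = \sum_{n \in \nn} (q-p)^{*n} f^{(n)}(p) \frac{1}{n!} \]
converges to $f(q)$ on $\Sigma(p, R-|p|)$. When $R = +\infty$, this set is $\Sigma(p,+\infty) = \hh$, which is in particular a Euclidean neighborhood of $p$. By Definition \ref{defanalyticity}, $f$ is therefore quaternionic analytic at $p$, and since $p$ was arbitrary, $f$ is quaternionic analytic on all of $\hh$.

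There is really no substantial obstacle here: the only delicate point is the correct interpretation of the bound $R-|p|$ when $R = +\infty$, and that amounts to noting that a $\sigma$-ball of infinite radius exhausts $\hh$ and is thus trivially a Euclidean neighborhood of its center. All the genuine work has already been carried out in establishing Theorems \ref{sviluppo} and \ref{quatanalyticity}; this closing statement is essentially a specialization to the limiting case $R = +\infty$, in which the hyperboloid $\partial \an(B)$ escapes to infinity and the obstruction to quaternionic analyticity on $B$ disappears.
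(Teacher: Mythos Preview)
Your proposal is correct and matches the paper's own reasoning: the paper states this theorem without proof, having already noted in the introduction that $\an(\hh) = \hh$ (since $2|Im(p)| < +\infty$ holds for every $p$), so the result is an immediate specialization of Theorem \ref{quatanalyticity} to $R = +\infty$. Your alternative route via Theorem \ref{sviluppo} is equally valid and equally in the spirit of the paper.
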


The peculiar results we have proven depend on the fact that the sets of convergence of regular power series are $\sigma$-balls, which are not open in the Euclidean topology. We are thus encouraged to define a weaker notion of analyticity in terms of the topology induced in $\hh$ by the distance $\sigma$, a topology that is finer than the Euclidean.

\begin{definition}
Let $\Omega$ be a $\sigma$-open subset of $\hh$. A function $f : \Omega \to \hh$ is \emph{$\sigma$-analytic} at $p \in \Omega$ if there exist $\sum_{n \in \nn} (q-p)^{*n} a_n$ and $R>0$ such that $f(q) = \sum_{n \in \nn} (q-p)^{*n} a_n$ in $\Sigma(p,R) \cap \Omega$. We say that $f$ is \emph{$\sigma$-analytic} if it is $\sigma$-analytic at all $p \in \Omega$.
\end{definition}

We remark that in the previous definition (and in the rest of the paper) the series we consider are still meant to converge with respect to the usual Euclidean norm. We only make use of the $\sigma$-topology when dealing with subsets of $\hh$ (typically, with the domains of definition of regular functions) and only when explicitly stated. 

We immediately deduce from theorem \ref{sviluppo} the following result. We notice the complete analogy to the complex case.

\begin{theorem}
A regular power series $\sum_{n \in \nn}q^n a_n$ having radius of convergence $R$ defines a $\sigma$-analytic function on $B(0,R)$.
\end{theorem}

At this point, it would be natural to inquire about the quaternionic and $\sigma$-analyticity of sums of regular power series $\sum_{n \in \nn} (q-p)^{*n} a_n$ centered at an arbitrary point $p \in \hh$ and converging in a $\sigma$-ball $\Sigma(p,R)$. Instead of employing the same techniques we applied in this first enquire, we postpone this problem to the next section where such series are studied as part of a larger class of quaternionic functions.


\section{Analyticity of quaternionic regular functions}\label{sectionregularfunctions}

Besides its independent interest, the study we conducted in the previous sections is motivated by the theory of quaternion-valued functions of one quaternionic variable introduced in \cite{advances} and developed in subsequent papers (see the survey \cite{survey} and the references therein). The theory, which is also the basis for a new functional calculus in a non commutative setting (see \cite{electronic,jfa}), relies upon a definition of regularity for quaternionic functions inspired by C. G. Cullen \cite{cullen} (see also \cite{deleo}).

\begin{definition}\label{definition}
Let $\Omega$ be a domain in $\hh$ and let $f : \Omega \to \hh$ be a function. For all $I \in \s$, we denote $L_I = \rr + I \rr$, $\Omega_I = \Omega \cap L_I$ and $f_I = f_{|_{\Omega_I}}$. 
The function $f$ is called \textnormal{slice regular} if, for all $I \in \s$, the restriction $f_I$ is holomorphic, i.e. the function $\bar \partial_I f : \Omega_I \to \hh$ defined by
\begin{equation}
\bar \partial_I f (x+Iy) = \frac{1}{2} \left( \frac{\partial}{\partial x}+I\frac{\partial}{\partial y} \right) f_I (x+Iy)
\end{equation}
vanishes identically.
\end{definition}

From now on, we will refer to slice regular functions as regular functions \emph{tout court}. As proven in \cite{advances}, the class of regular functions includes the sums of the series $\sum_{n\in \nn} q^n a_n$ in their balls of convergence $B(0,R)$. This explains why we called these series ``regular''. Moreover, the following can be proven by direct computation.

\begin{theorem}\label{regularseriesregularity}
Let $p \in \hh$ and let $f(q) = \sum_{n \in \nn} (q-p)^{*n} a_n$ have $\sigma$-radius of convergence $R$. If $\Omega(p,R) \neq \emptyset$ then $f : \Omega(p,R) \to \hh$ is a regular function.
\end{theorem}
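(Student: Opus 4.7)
The plan is to verify slice by slice that $f$ is Cullen-holomorphic on $\Omega(p,R)$. Fix $I \in \s$ so that $p \in L_I$. Since the hypothesis $\Omega(p,R)\neq \emptyset$ is equivalent to $R > |Im(p)|$ by Remark \ref{chiave}, $\Omega(p,R)$ is a symmetric open set meeting the real axis, and in particular its intersection with every $L_J$ is open and non-empty in $L_J$. It then suffices to prove $\bar\partial_J f_J \equiv 0$ on $\Omega(p,R)\cap L_J$ for each $J\in \s$, and I will handle the preferred slice $L_I$ separately from the remaining ones.

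Consider first $L_I$. For $z\in L_I$ the variable $z$ commutes with $p$, hence $(z-p)^{*n} = (z-p)^n$ and $f_I(z) = \sum_n (z-p)^n a_n$. Each partial sum is a polynomial in $z$ with coefficients in $\hh$ and is therefore Cullen-holomorphic on $L_I$. Theorem \ref{regularseriesconvergence} gives uniform convergence on compact subsets of $\Omega(p,R)\cap L_I$, and a Weierstrass-type argument (obtained after decomposing $\hh = L_I \oplus L_I J_0$, for a fixed $J_0\perp I$, into two $L_I$-valued components) then yields $\bar\partial_I f_I \equiv 0$.

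Now fix $J\in \s\setminus\{\pm I\}$. Each polynomial $(q-p)^{*n}$ is itself a regular power series centered at $0$, so the corollary to Theorem \ref{proprietageometrica} provides
$$
(x+Jy-p)^{*n} = \frac{1-JI}{2}(x+Iy-p)^n + \frac{1+JI}{2}(x-Iy-p)^n.
$$
Right-multiplying by $a_n$ and summing (both series on the right converge because axial symmetry of $\Omega(p,R)$ forces $x\pm Iy \in \Omega(p,R)\cap L_I$ whenever $x+Jy \in \Omega(p,R)$) we obtain the representation
$$
f(x+Jy) = \frac{1-JI}{2}\, f_I(x+Iy) + \frac{1+JI}{2}\, f_I(x-Iy).
$$
Differentiating in $x$ and $y$, using $\partial_y f_I(x\pm Iy) = \pm I\, f_I'(x\pm Iy)$ (which holds because $x\pm Iy - p \in L_I$ commutes with $I$) together with the algebraic identities $J(1-JI)I = -(1-JI)$ and $J(1+JI)I = 1+JI$, one checks that $\partial_x f_J + J\,\partial_y f_J \equiv 0$.

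The argument is essentially organisational; the only delicate point is the closing derivative computation, whose cancellation rests on the two non-commutative identities just cited. All other ingredients — the reduction of $*$-powers to ordinary powers on $L_I$, the per-term representation formula inherited from centered-at-$0$ polynomials, and the uniform convergence guaranteed by Theorem \ref{regularseriesconvergence} — have already been prepared in the earlier sections.
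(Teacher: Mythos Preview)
Your proof is correct and carries out in full detail exactly the ``direct computation'' that the paper merely announces without writing out. The split into the preferred slice $L_I$ (where the $*$-powers collapse to ordinary powers and a Weierstrass argument applies) and the remaining slices (handled via the representation formula from the corollary to Theorem \ref{proprietageometrica}, followed by the derivative check with the identities $J(1-JI)I=-(1-JI)$ and $J(1+JI)I=1+JI$) is precisely the intended route.
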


Regular functions are not always as nice as the aforementioned. Indeed, if the domain $\Omega$ is not carefully chosen then a regular function $f : \Omega \to \hh$ does not even need to be continuous.

\begin{example}\label{discontinuous}
Choose $I \in \s$ and define $f: \hh \setminus \rr \to \hh$ as follows:
$$f(q) = \left\{ 
\begin{array}{ll}
0 \ \mathrm{if} \ q \in \hh \setminus L_I\\
1 \ \mathrm{if} \ q \in L_I \setminus \rr
\end{array}
\right.$$
This function is clearly regular, but not continuous.
\end{example}

Properties of the domains of definition of regular functions which prevent such pathologies have been identified in \cite{advancesrevised}.

\begin{definition}
Let $\Omega$ be a domain in $\hh$, intersecting the real axis. If $\Omega_I = \Omega \cap L_I$ is a domain in $L_I \simeq \cc$ for all $I \in \s$ then we say that $\Omega$ is a \textnormal{slice domain}.
\end{definition}

The following result holds for regular functions on slice domains.

\begin{theorem}[Identity principle] \label{identity}
Let $\Omega$ be a slice domain and let $f,g : \Omega \to \hh$ be regular. Suppose that $f$ and $g$ coincide on a subset $C$ of $\Omega_I$, for some $I \in \s$. If $C$ has an accumulation point in $\Omega_I$, then $f \equiv g$ in $\Omega$.
\end{theorem}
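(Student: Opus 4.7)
Set $h = f - g$, which is regular on $\Omega$; it suffices to prove $h \equiv 0$. The strategy is to reduce, slice by slice, to the classical Identity Principle for $\cc$-valued holomorphic functions.

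First, on the distinguished slice $\Omega_I$, I would choose $J \in \s$ with $IJ = -JI$; then $\hh = L_I \oplus L_I J$ as a real direct sum, so one can write $h_I(z) = F(z) + G(z)J$ with $F, G : \Omega_I \to L_I$. Expanding $\bar\partial_I h_I$ and using that $J$ anticommutes with $I$ yields $\bar\partial_I h_I = \bar\partial_I F + (\bar\partial_I G)J$, so the regularity hypothesis forces $F$ and $G$ to be holomorphic as $\cc$-valued functions on the complex domain $\Omega_I \subset L_I \cong \cc$. Both $F$ and $G$ vanish on $C$, which has an accumulation point in the connected set $\Omega_I$, so the classical Identity Principle yields $F \equiv G \equiv 0$, hence $h \equiv 0$ on $\Omega_I$.

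Next I would propagate the vanishing to every other slice $\Omega_K$. Since $\Omega$ is a slice domain, $\Omega \cap \rr$ is a non-empty open subset of $\rr$ contained in $\Omega_I$, so $h$ vanishes on $\Omega \cap \rr$. For arbitrary $K \in \s$, the restriction $h_K$ is $\hh$-valued holomorphic on the connected domain $\Omega_K \supseteq \Omega \cap \rr$. Choosing $J' \in \s$ with $KJ' = -J'K$ and repeating the decomposition argument on $\Omega_K$, the two resulting $\cc$-valued holomorphic components vanish on $\Omega \cap \rr$, a subset of $\Omega_K$ with plenty of accumulation points, so the classical Identity Principle gives them vanishing on all of $\Omega_K$, hence $h \equiv 0$ on $\Omega_K$. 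Since $\Omega = \bigcup_{K \in \s} \Omega_K$, we conclude $h \equiv 0$ on $\Omega$.

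The main obstacle is the short computational verification that the splitting $h_I = F + GJ$ turns the single quaternionic Cauchy--Riemann condition $\bar\partial_I h_I = 0$ into two genuine complex Cauchy--Riemann conditions for $F$ and $G$; once this is secured, the rest of the argument reduces to two invocations of the classical Identity Principle, together with the slice domain hypothesis used to guarantee both connectedness of each $\Omega_K$ and the existence of enough real points lying in all slices simultaneously.
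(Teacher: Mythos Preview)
The paper does not actually prove this theorem: it is quoted from \cite{advancesrevised} as a known result, so there is no proof in the paper to compare against. Your argument is correct and is precisely the standard one given in that reference: the decomposition $h_I = F + GJ$ with $J$ orthogonal to $I$ is what \cite{advances,advancesrevised} call the \emph{Splitting Lemma}, and once it is in hand the two applications of the classical Identity Principle (first on $\Omega_I$, then on every $\Omega_K$ via the shared real points) go through exactly as you describe.

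One small remark on the computational step you flag as the ``main obstacle'': since $J$ is a constant multiplied on the right, the operator $\bar\partial_I = \tfrac{1}{2}(\partial_x + I\partial_y)$ acts on $GJ$ simply as $(\bar\partial_I G)J$, with no anticommutation issue arising (the $I$ acts on the left, the $J$ sits on the right). The linear independence of $L_I$ and $L_I J$ then forces $\bar\partial_I F = 0$ and $\bar\partial_I G = 0$ separately, which is all you need.
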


Analogs of many other classical results in complex analysis are proven in \cite{advancesrevised} for regular functions on slice domains: the integral representation formula, the mean value property, the maximum modulus principle, Cauchy's estimates, Liouville's theorem, Morera's theorem, Schwarz's lemma. 

Adding the condition of axial symmetry (defined in section \ref{sectionsigma}) leads to even finer properties. Indeed, theorem \ref{proprietageometrica} extends to all regular functions on a symmetric slice domain $\Omega$, yielding that such functions are $C^{\infty}(\Omega)$. The condition of symmetry is not restrictive when studying regular functions on slice domains, because of the following (surprising) theorem. Define the \emph{symmetric completion} of each set $T\subseteq \hh$ as the symmetric set $$\widetilde{T} = \bigcup_{x+Iy \in T} \left(x+\s y\right).$$

\begin{theorem}[Extension theorem]
Let $\Omega \subseteq \hh$ be a slice domain and let $f : \Omega \to \hh$ be a regular function. Then $\widetilde{\Omega}$ is a symmetric slice domain and $f$ extends to a unique regular function $\tilde f : \widetilde{\Omega} \to \hh$.
\end{theorem}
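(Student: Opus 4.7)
The plan has two halves: first, verify that $\widetilde{\Omega}$ is a symmetric slice domain; then construct $\tilde f$ by extending $f$ on each 2-sphere $x+\s y\subseteq\widetilde{\Omega}$ as an affine-in-$I$ function whose coefficients are extracted from local regular power series expansions of $f$, and close with an identity-principle argument for uniqueness.

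For the symmetric-slice-domain part, $\widetilde{\Omega}$ is symmetric by construction and open as the union of rotates $u\,\Omega\,u^{-1}$ taken over unit quaternions $u$, each being open. It meets the real axis because $\widetilde{\Omega}\cap\rr=\Omega\cap\rr\neq\emptyset$. For each $I\in\s$, the slice $\widetilde{\Omega}_I$ is connected since the map $\Omega\to L_I$ sending $x+Jy$ to $x+Iy$ (for $y\geq 0$) is continuous and has image the upper half of $\widetilde{\Omega}_I$, while a symmetric map covers the lower half, and the two halves meet along $\Omega\cap\rr$. Global connectedness of $\widetilde{\Omega}$ then follows because every 2-sphere $x+\s y\subseteq\widetilde{\Omega}$ is itself connected and meets the connected set $\Omega$.

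For the construction, fix $q=x+Jy\in\widetilde{\Omega}$ with $y>0$; by the definition of the symmetric completion there exists $p_0=x+I_0 y\in\Omega$. By Theorem \ref{analyticity2}, $f$ expands at $p_0$ as a regular power series $\sum_n (q-p_0)^{*n}a_n$ convergent on some $\Sigma(p_0,r)$, and applying Theorem \ref{proprietageometrica} to this series produces functions $b,c$ of the two real variables $(x',y')$ such that $f(x'+Iy')=b(x',y')+I\,c(x',y')$ for every 2-sphere $x'+\s y'\subset\Omega(p_0,r)$ and every $I\in\s$. In particular this affine representation is valid on the open arc $(x+\s y)\cap\Omega(p_0,r)$, and I then set $\tilde f(x+Jy):=b(x,y)+J\,c(x,y)$ for every $J\in\s$.

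Well-definedness requires that $b(x,y)$ and $c(x,y)$ do not depend on the choice of $p_0\in\Omega\cap(x+\s y)$. If a second $p_0'=x+I_0'y\in\Omega$ yields coefficients $b',c'$ whose arc of applicability overlaps that of $p_0$, then the two affine functions $I\mapsto b+I c$ and $I\mapsto b'+Ic'$ of $\s$ agree on a nonempty open subset of $\s$ and must therefore agree globally, giving $b=b'$ and $c=c'$. When $p_0$ and $p_0'$ lie in distinct connected components of $\Omega\cap(x+\s y)$, I propagate the equality by chaining through intermediate points along a path in $\Omega$ and invoking the identity principle (Theorem \ref{identity}) on the corresponding symmetric $\sigma$-neighborhoods at each step; this propagation is the main technical obstacle. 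Regularity of $\tilde f$ on $\widetilde{\Omega}$ then follows because on each slice $L_I$ one has $\tilde f(x+Iy)=b(x,y)+I\,c(x,y)$, with $b,c$ inheriting the holomorphy of $f|_{\Omega_I}$ through the power-series formulas, while uniqueness is immediate from Theorem \ref{identity} applied to any two regular extensions of $f$, which must agree on $\Omega$ and in particular on $\Omega\cap\rr$.
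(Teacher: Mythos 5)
The paper itself does not prove this theorem --- it quotes it from \cite{advancesrevised} --- so your attempt has to stand on its own, and it has a genuine gap at the central step of the construction. You extract the affine representation $f(x'+Iy')=b(x',y')+I\,c(x',y')$ from the power series expansion of $f$ at $p_0=x+I_0y$, valid on spheres $x'+\s y'\subset\Omega(p_0,r)$, and then evaluate $b,c$ at $(x,y)$ to define $\tilde f$ on the sphere through $p_0$ itself. But $\Omega(p_0,r)$ is an \emph{axially symmetric} set (remark \ref{chiave}): its intersection with any 2-sphere is either empty or the whole sphere, never an ``open arc''. Since $\omega(q,p_0)\equiv 2y$ for every $q\in x+\s y$, the sphere through $p_0$ is contained in $\Omega(p_0,r)$ if and only if $r>2|Im(p_0)|$, and an admissible such $r$ exists if and only if $p_0\in\an(\Omega)$. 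Outside $\an(\Omega)$ --- which is in general a thin neighborhood of $\Omega\cap\rr$, strictly smaller than $\Omega$ --- the coefficient series $\sum_n b_n(x,y)a_n$ and $\sum_n c_n(x,y)a_n$ need not converge at all (one computes $|b_n(x,y)|=\tfrac12(2y)^n$ for $n\geq 1$), so your formula $\tilde f(x+Jy):=b(x,y)+Jc(x,y)$ is simply undefined there. Your construction therefore only extends $f$ over the spheres through $\an(\Omega)$, not over $\widetilde{\Omega}$; and the well-definedness argument collapses for the same reason, since the two affine maps never ``agree on a nonempty open arc of $\s$'' --- no such arcs exist.

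The second gap is the one you flag yourself: propagating the local data between points of $\Omega$ lying over the same sphere but not linked by a single expansion. This is not a deferrable technicality --- it is the actual content of the theorem. The proof in \cite{advancesrevised} handles it by first establishing that $f$ satisfies a two-point representation formula, $f(x+Ky)=(I-J)^{-1}\left[If(x+Iy)-Jf(x+Jy)\right]+K(I-J)^{-1}\left[f(x+Iy)-f(x+Jy)\right]$ whenever the relevant points lie in $\Omega$: this holds near $\Omega\cap\rr$ by the real-centered expansion (theorem \ref{sviluppoinunreale}) and is propagated to all of $\Omega$ by an open-and-closed argument combined with the identity principle. Once that is in place, $\tilde f$ is defined by the same formula using any two units $I\neq J$ with $x+Iy,\,x+Jy\in\Omega$ (which exist because $\Omega\cap(x+\s y)$ is relatively open in the sphere), and well-definedness and regularity follow. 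Your first part --- that $\widetilde{\Omega}$ is a symmetric slice domain --- and your uniqueness argument via theorem \ref{identity} are both correct.
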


In addition to its intrinsic meaning, this result (proven in \cite{advancesrevised}) allows the definition of an operation $*$, called \emph{regular multiplication}, on the set $\mathcal{R}(\Omega)$ of regular functions on a (symmetric) slice domain $\Omega$. It turns out that $(\mathcal{R}(\Omega), + , *)$ is a (non-commutative) associative real algebra and this algebraic structure is strictly related to the distribution of the zeros of regular functions. Furthermore, the multiplicative inverse $f^{-*}$ of any $f\not \equiv 0$ in $\mathcal{R}(\Omega)$ is computed in \cite{advancesrevised}, extending results proven in \cite{open}. Finally, the minimum modulus principle and the open mapping theorem presented in \cite{open} extend to all regular functions on (symmetric) slice domains.

We warn the reader: several results that we will prove in this section are stated for regular functions defined on generic domains $\Omega$ in $\hh$, but they only acquire full significance when $\Omega$ is a slice domain.

The study conducted in the previous sections allows us to provide regular functions on symmetric slice domains with power series expansions.

\begin{definition}
Let $\Omega$ be a domain in $\hh$ and let $f : \Omega \to \hh$ be a regular function. Its \textnormal{slice derivative} is the regular function $f'$ which equals
\begin{equation}
\partial_I f (x+Iy) = \frac{1}{2} \left( \frac{\partial}{\partial x}-I\frac{\partial}{\partial y} \right) f_I (x+Iy)
\end{equation}
on $\Omega_I$, for all $I \in \s$.
\end{definition}

We may denote the slice derivative of $f$ by $f'$ and its $n$-th slice derivative by $f^{(n)}$ without causing any confusion, since the slice derivative of $\sum_{n \in \nn} q^n a_n$ coincides with the formal derivative $\sum_{n \in \nn} q^n a_{n+1}(n+1)$. With this notation, we rephrase a result proven in \cite{advancesrevised} as follows.

\begin{theorem}\label{sviluppoinunreale}
Let $f$ be a regular function on a domain $\Omega \subseteq \hh$ and let $x \in \Omega \cap \rr$. In each Euclidean ball $B(x,R)$ contained in $\Omega$ the function $f$ expands as
\begin{equation}
f(q) = \sum_{n \in \nn} (q-x)^n f^{(n)}(x)\frac{1}{n!}.
\end{equation}
\end{theorem}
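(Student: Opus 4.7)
The plan is to reduce the statement to the classical Taylor expansion on each slice $L_I$ and then to verify that the coefficients obtained slice by slice are the same quaternions. Since $B(x,R)\subseteq \Omega$, for every $I\in \s$ the disc $B_I(x,R)=B(x,R)\cap L_I$ sits inside $\Omega_I$, and by definition of regularity the restriction $f_I$ is $L_I$-holomorphic there. Picking $J\in \s$ with $JI=-IJ$ we have $\hh = L_I\oplus L_I J$ as a right $L_I$-module, so that $f_I = F + G\,J$ for uniquely determined $L_I$-valued holomorphic functions $F,G$ on $\Omega_I$. Applying the classical complex Taylor theorem to each of $F$ and $G$ on the disc $B_I(x,R)\subseteq L_I$ and recombining yields an expansion
\[
f_I(z)=\sum_{n\in \nn}(z-x)^n a_n^I, \qquad a_n^I=\frac{f_I^{(n)}(x)}{n!},
\]
where $f_I^{(n)}$ denotes the ordinary $n$-th complex derivative along $L_I$.

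The crucial step is to check that $a_n^I$ does not actually depend on $I$. The Cauchy-Riemann condition $\bar\partial_I f_I=0$ reads $\partial_x f_I+I\,\partial_y f_I=0$, which rewrites as $\partial_y f_I=I\,\partial_x f_I$; plugging into the definition of the slice derivative gives $\partial_I f_I=\tfrac12(\partial_x-I\,\partial_y)f_I=\partial_x f_I$ on all of $\Omega_I$. At a real point $x$ the quantity $\partial_x f_I(x)$ is simply the derivative of $f|_{\Omega\cap\rr}$ along the real axis, an expression not involving $I$ at all. Hence $f'(x)=\partial_I f_I(x)$ is one and the same quaternion for every $I\in\s$; iterating the argument, $f_I^{(n)}(x)=f^{(n)}(x)$ for every $n\in\nn$, and therefore $a_n^I=f^{(n)}(x)/n!=:a_n$ is independent of $I$.

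To finish, observe that $x\in \rr$ forces $(q-x)^{*n}=(q-x)^n$ for all $q\in \hh$, so the slice-wise identity $f_I(z)=\sum_n (z-x)^n a_n$ holds verbatim on $B_I(x,R)$ for every $I\in\s$. Since any $q\in B(x,R)$ belongs to $B_I(x,R)$ for a suitable choice of $I\in\s$ (namely $I=Im(q)/|Im(q)|$, or any $I$ when $q\in\rr$), these identities glue into the single global expansion $f(q)=\sum_n (q-x)^n a_n$ on $B(x,R)$. The main delicate point in the argument is precisely the independence of the Taylor coefficients from the chosen slice; without it one would only have an uncorrelated holomorphic expansion on each $L_I$, rather than a single quaternionic regular power series representing $f$.
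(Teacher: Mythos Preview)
The paper does not supply its own proof of this statement: it is quoted as a result already established in \cite{advancesrevised}. Your argument is correct. The reduction to the classical Taylor expansion on each complex slice via the splitting $f_I=F+GJ$, together with the observation that at a real centre $x$ the slice derivative reduces to the real directional derivative $\partial_x f(x)$ and is therefore slice-independent, is the natural route and presumably close in spirit to what the cited reference does. The one step worth making explicit is the iteration: when you say ``iterating the argument'' you are implicitly using that $f'$ is again a regular function on $\Omega$, so that the same reasoning applies to $f',f'',\dots$ in turn. This holds because each restriction $(f')_I=\partial_I f_I$ is the complex derivative of a holomorphic map and hence holomorphic, while consistency of $f'$ on $\Omega\cap\rr$ across different slices is exactly the independence you have just established for $n=1$.
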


We are now able to extend the previous result as follows.

\begin{theorem}\label{expansion}
Let $f$ be a regular function on a domain $\Omega \subseteq \hh$ and let $p \in \Omega$. In each $\sigma$-ball $\Sigma(p,R)$ contained in $\Omega$ the function $f$ expands as
\begin{equation}\label{quaternionicexpansion}
f(q) = \sum_{n \in \nn} (q-p)^{*n} f^{(n)}(p)\frac{1}{n!}.
\end{equation}
\end{theorem}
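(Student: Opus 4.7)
My plan is to reduce the claim to a complex Taylor expansion on the slice $L_I$ through $p$, which identifies both the candidate coefficients $f^{(n)}(p)/n!$ and a lower bound on the $\sigma$-radius of convergence of the right-hand side of \eqref{quaternionicexpansion}, and then to propagate the equality obtained on $L_I$ to all of $\Sigma(p,R)$ via the decomposition $\Sigma(p,R) = B_I(p,R) \cup \Omega(p,R)$. Concretely, I would first choose $I \in \s$ with $p \in L_I$. Since $\Sigma(p,R) \cap L_I = B_I(p,R)$ and $\Sigma(p,R) \subseteq \Omega$, the disc $B_I(p,R)$ is contained in $\Omega_I$, so $f_I \colon B_I(p,R) \to \hh$ is a vector-valued holomorphic function and admits a convergent Taylor expansion
$$f_I(z) = \sum_{n \in \nn} (z-p)^n c_n$$
on $B_I(p,R)$, with $c_n \in \hh$ and $\limsup_n |c_n|^{1/n} \leq 1/R$ from the standard Cauchy estimates. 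A direct computation using that on $L_I$ the slice derivative $\partial_I$ coincides with the complex derivative in the variable $z = x+Iy$ yields $c_n = f^{(n)}(p)/n!$.

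Next, set $g(q) := \sum_{n \in \nn} (q-p)^{*n} f^{(n)}(p)/n!$. Theorem \ref{regularseriesconvergence} together with the coefficient bound above ensures that $g$ has $\sigma$-radius of convergence at least $R$ and hence converges absolutely on $\Sigma(p,R)$, and by construction $f = g$ on $B_I(p,R)$. If $R \leq |Im(p)|$, Remark \ref{chiave} gives $\Omega(p,R) = \emptyset$ and $\Sigma(p,R) = B_I(p,R)$, so the proof is finished. Otherwise $\Omega(p,R)$ is a symmetric slice domain: each slice $\Omega(p,R) \cap L_J$ is the nonempty connected lens $B_J(p_J,R) \cap B_J(\overline{p_J},R)$ with $p_J = Re(p) + J|Im(p)|$, and $\Omega(p,R)$ meets $\rr$ because $R > |Im(p)|$. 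On $\Omega(p,R)$ both $f$ (by hypothesis) and $g$ (by Theorem \ref{regularseriesregularity}) are regular, and they coincide on the relatively open subset $B_I(p,R) \cap \Omega(p,R)$ of $L_I$, which certainly has accumulation points there. The identity principle (Theorem \ref{identity}) then forces $f \equiv g$ on $\Omega(p,R)$, and combined with the equality on $B_I(p,R)$ this gives $f = g$ on all of $\Sigma(p,R) = B_I(p,R) \cup \Omega(p,R)$.

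The main obstacle is that $\Sigma(p,R)$ itself is not axially symmetric when $p \notin \rr$, so neither the identity principle nor the other tools for regular functions on symmetric slice domains from \cite{advancesrevised} apply directly to it. The decomposition $\Sigma(p,R) = B_I(p,R) \cup \Omega(p,R)$ sidesteps this: equality of $f$ and $g$ on $B_I(p,R)$ follows from the ordinary complex Taylor theorem applied to the $\hh$-valued holomorphic function $f_I$, equality on the symmetric slice domain $\Omega(p,R)$ follows from the identity principle, and the two pieces overlap in a nonempty open subset of $L_I$ with accumulation points, which is precisely the hypothesis required to deploy the identity principle.
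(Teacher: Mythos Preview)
Your proof is correct and follows essentially the same route as the paper's: obtain the Taylor expansion of $f_I$ on $B_I(p,R)$ to identify the coefficients and secure convergence of the series $g$ on $\Sigma(p,R)$, then either finish immediately when $\Omega(p,R)=\emptyset$ or apply the identity principle on the slice domain $\Omega(p,R)$ to pass from $f=g$ on $B_I(p,R)\cap\Omega(p,R)$ to $f=g$ on all of $\Sigma(p,R)$. Your write-up is in fact a bit more explicit than the paper's (e.g.\ spelling out the Cauchy estimate $\limsup_n |c_n|^{1/n}\le 1/R$ and verifying that $\Omega(p,R)$ is a symmetric slice domain), but the underlying argument is the same.
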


\begin{proof}
By construction $\Omega_I \supseteq \Sigma(p,R) \cap L_I = B_I(p,R)$. By the properties of holomorphic functions of one complex variable, $f_I(z)$ expands as $\sum_{n \in \nn} (z-p)^n f^{(n)}(p)\frac{1}{n!}$ in $B_I(p,R)$. We conclude that the series in equation (\ref{quaternionicexpansion}) converges in $\Sigma(p,R)$. If $\Omega(p,R)$ is empty then the assertion is proved. Otherwise, the series in equation (\ref{quaternionicexpansion}) defines a regular function $g: \Omega(p,R) \to \hh$. Since $f_I \equiv g_I$ in $B_I(p,R)$, the identity principle \ref{identity} allows us to conclude that $f$ and $g$ coincide in $\Omega(p,R)$ (hence in $\Sigma(p,R)$, as desired).
\end{proof}

\begin{corollary}
A quaternionic function is slice regular in a domain if, and only if, it is $\sigma$-analytic in the same domain. 
\end{corollary}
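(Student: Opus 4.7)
The plan is to establish both implications using Theorem~\ref{expansion} together with the elementary estimate $\sigma(q,p) \geq |q-p|$ and the commutativity of the slice $L_I$.

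For the forward implication, let $f$ be slice regular on the domain $\Omega$. For any $p \in \Omega$ I would choose $\varepsilon > 0$ with $B(p,\varepsilon) \subseteq \Omega$; since $\sigma(q,p) \geq |q-p|$, the inclusion $\Sigma(p,\varepsilon) \subseteq B(p,\varepsilon) \subseteq \Omega$ is automatic, and Theorem~\ref{expansion} then furnishes the expansion $f(q) = \sum_{n\in\nn}(q-p)^{*n} f^{(n)}(p)/n!$ on $\Sigma(p,\varepsilon)$. This is precisely $\sigma$-analyticity at $p$. (Note that $\Omega$ is $\sigma$-open because the $\sigma$-topology is finer than the Euclidean one, so $\sigma$-analyticity on $\Omega$ even makes sense.)

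For the converse, assume $f$ is $\sigma$-analytic on $\Omega$. I would fix $I \in \s$ and a point $p \in \Omega_I$, and apply the hypothesis at $p$ to obtain $f(q) = \sum_{n\in\nn}(q-p)^{*n}a_n$ on $\Sigma(p,R) \cap \Omega$ for some $R>0$; shrinking $R$ if necessary I arrange $B_I(p,R) \subseteq \Omega_I$ (possible since $\Omega_I$ is open in $L_I$). Because $p \in L_I$ commutes with every $z \in L_I$, on $B_I(p,R)$ the restriction simplifies to $f_I(z) = \sum_{n\in\nn}(z-p)^n a_n$. Differentiating term by term, with the justification of uniform convergence of the series and of its formal derivative on compact subsets of $B_I(p,R)$, one obtains
\[
\bar\partial_I f_I(z) = \tfrac{1}{2}\sum_{n \geq 1} n\,(z-p)^{n-1}(1+I^2)\,a_n = 0,
\]
since $I^2 = -1$. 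As $p \in \Omega_I$ and $I \in \s$ were arbitrary, $f$ is slice regular on $\Omega$.

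The main subtlety is the algebraic cancellation in the last display: the coefficients $a_n$ sit on the right and in general fail to commute with $I$, so the two terms produced by $\partial_x$ and $I\partial_y$ cannot simply be combined. The cancellation succeeds precisely because $(z-p)^{n-1}\in L_I$ commutes with $I$, which allows the $I$ coming from $I\partial_y$ to slide past $(z-p)^{n-1}$ and meet the $I$ coming from $\partial_y(z-p)=I$ \emph{before} reaching $a_n$; the two $I$'s then collapse to $I^2=-1$. Once this observation is recorded, everything else—choosing radii, invoking uniform convergence on compacta—is routine given the results already proved in the paper.
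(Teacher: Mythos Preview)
Your proof is correct. The paper states this corollary without an explicit argument, relying on Theorem~\ref{expansion} for the forward direction and on the slice regularity of regular power series for the converse; your write-up makes both directions explicit in precisely this spirit, and your care with the placement of $I$ in the slicewise computation of $\bar\partial_I f_I$ (so that the two factors of $I$ meet \emph{before} the non-commuting coefficient $a_n$) is exactly the point that needs attention.
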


The previous corollary recalls the complex theory, although its significance is quite different because of the properties of the topology involved in this case. Indeed, example \ref{discontinuous} proves that if the domain of definition is ill-chosen then a $\sigma$-analytic function need not be continuous. On the other hand, we might inquire about the quaternionic analyticity of a regular function (which implies $C^\infty$ continuity). It turns out that the set of quaternionic analyticity is far more complicated, as we are about to see. For all $T \subseteq \hh$, we denote $\partial_\sigma T$ its $\sigma$-boundary, we define
$$\an(T)= \{p \in T : 2|Im(p)| < \sigma(p,\partial_\sigma T)\}$$
and obtain the following  direct consequence of remarks \ref{chiave}, \ref{chiave1} and theorems \ref{regularseriesconvergence}, \ref{expansion}.

\begin{corollary}\label{seidieci}
All  regular functions on a domain $\Omega\subseteq \hh$ are quaternionic analytic in $\an(\Omega)$.
\end{corollary}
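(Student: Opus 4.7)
The plan is to derive the corollary directly from Theorem \ref{expansion} by combining it with the geometric descriptions in Remarks \ref{chiave} and \ref{chiave1}. I will fix $f : \Omega \to \hh$ regular and a point $p \in \an(\Omega)$, and verify Definition \ref{defanalyticity} at $p$ by producing a Euclidean open neighborhood of $p$ on which $f$ is the sum of a regular power series centered at $p$.

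The first step is to select a real $R > 2|Im(p)|$ such that $\Sigma(p,R) \subseteq \Omega$. Since $\Omega$ is Euclidean-open it is also $\sigma$-open, so $\partial_\sigma \Omega \subseteq \hh \setminus \Omega$; the hypothesis $2|Im(p)| < \sigma(p,\partial_\sigma \Omega)$ thus yields a radius $R$ strictly between $2|Im(p)|$ and $\sigma(p,\partial_\sigma \Omega)$ whose $\sigma$-ball $\Sigma(p,R)$ is entirely contained in $\Omega$.

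With such an $R$ fixed, Theorem \ref{expansion} furnishes the expansion
$$f(q) = \sum_{n \in \nn} (q-p)^{*n}\, f^{(n)}(p)\, \frac{1}{n!}$$
for every $q \in \Sigma(p,R)$, and Theorem \ref{regularseriesconvergence} certifies that the right-hand side is a genuine regular power series centered at $p$ converging absolutely on $\Sigma(p,R)$. By Remark \ref{chiave1} the Euclidean interior of $\Sigma(p,R)$ is the open set $\Omega(p,R)$, and by Remark \ref{chiave} the inequality $R > 2|Im(p)|$ is precisely the condition for $p \in \Omega(p,R)$. Hence $\Omega(p,R)$ is a Euclidean open neighborhood of $p$ inside $\Omega$ on which $f$ coincides with a regular power series centered at $p$, which by Definition \ref{defanalyticity} establishes quaternionic analyticity of $f$ at $p$.

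The main technical subtlety lies in the very first step: one must check that $\sigma(p,\partial_\sigma \Omega)$ genuinely equals the supremum of radii $r$ for which $\Sigma(p,r) \subseteq \Omega$. The direction $\sigma(p,\partial_\sigma \Omega) \geq \sup\{r : \Sigma(p,r) \subseteq \Omega\}$ is immediate from $\Sigma(p,r) \cap \partial_\sigma \Omega = \emptyset$, while the reverse rests on a connectedness argument: because $R > 2|Im(p)|$ forces $\Omega(p,R) \cap \rr \neq \emptyset$ (by Remark \ref{chiave}) and $\Sigma(p,R)$ is $\sigma$-path-connected through this real intersection, a $\sigma$-ball around $p$ that avoids $\partial_\sigma \Omega$ cannot split between $\Omega$ and the $\sigma$-interior of its complement, and therefore must sit entirely inside $\Omega$. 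Once this point is dispatched, the remainder of the proof is a mechanical concatenation of Theorem \ref{expansion} with Remarks \ref{chiave} and \ref{chiave1}, as the corollary's preamble advertises.
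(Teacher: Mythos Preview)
Your proof is correct and follows essentially the same route the paper has in mind: the paper states the corollary without proof, simply declaring it ``a direct consequence of remarks \ref{chiave}, \ref{chiave1} and theorems \ref{regularseriesconvergence}, \ref{expansion}'', and your argument is precisely the unpacking of that consequence---pick $R$ with $2|Im(p)|<R<\sigma(p,\partial_\sigma\Omega)$, apply Theorem \ref{expansion} on $\Sigma(p,R)$, and use Remarks \ref{chiave} and \ref{chiave1} to recognise $\Omega(p,R)$ as a Euclidean neighborhood of $p$.

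The one point where you go beyond the paper is your final paragraph: you notice that the implication ``$R<\sigma(p,\partial_\sigma\Omega)\Rightarrow\Sigma(p,R)\subseteq\Omega$'' is not a tautology, because $\hh\setminus\Omega$ need not coincide with $\partial_\sigma\Omega$, and you supply the $\sigma$-connectedness argument that closes the gap. The paper silently takes this for granted. Your observation is correct and the justification is sound (indeed every $q\in\Sigma(p,R)$ can be joined to $p$ by a $\sigma$-continuous path lying in $\Sigma(p,R)$: first slide $q=x+Jy$ down to the real point $x$ within $L_J$, then move inside $B_I(p,R)\subset L_I$). So your write-up is, if anything, more careful than the original.
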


Remark that if $\Omega$ does not intersect the real axis $\rr$ then $\an(\Omega)$ is empty.

We now consider the case of a regular power series $f(q)$ centered at a point $p\in \hh$ and converging in $\Sigma(p,R) = \Omega(p,R) \cup B_I(p,R)$. By the previous corollary, its sum $f$ is quaternionic analytic in $\an(\Omega(p,R))$. Suppose $p=x_0+I y_0$ with $I \in \s$ and $x_0,y_0, R \in \rr$ such that $0<y_0<R$. By direct computation, $\an(\Omega(p,R))\subset \Omega(p,R)$ is the (open) region bounded by the hypersurface of points $s \in \Omega(p,R)$ such that $x = Re(s)$ and $y = |Im(s)|$ solve the equation
\begin{equation}\label{aperto}
(x-x_0)^2-3\left(y-\frac{y_0+2R}{3}\right)^2+\frac{(2y_0+R)^2}{3}=0,
\end{equation}
i.e. the hypersurface of revolution $\bigcup_{J \in \s} \hyp(J)$ where $\hyp(J)$ is the arc of hyperbola 
$$\hyp(J) = \{x+Jy : 0 \leq y \leq R-y_0, \ x,y\ \mathrm{verify\ (\ref{aperto})} \}$$ 
for all $J \in \s$. Furthermore, the following theorem proves that $f$ is also quaternionic analytic in $\an(B_I(p,R))$, which is the plane region in $L_I$ lying between $\hyp(-I)$ and the arc of hyperbola $\mathcal{K}(I)$ consisting of all points $x+Iy$ with $0 \leq y \leq R-y_0$ and
\begin{equation}
(x-x_0)^2-3\left(y+\frac{y_0-2R}{3}\right)^2+\frac{(2y_0-R)^2}{3}=0.
\end{equation}
We notice that $\hyp(I)$ always lies between $\hyp(-I)$ and $\mathcal{K}(I)$ in $L_I$, so that $\an(B_I(p,R))$ always includes $\an(\Omega(p,R))\cap L_I$ (which is the plane region lying between $\hyp(-I)$ and $\hyp(I)$). We also notice that $\mathcal{K}(I)$ always lies in $B_I(p,R) \cap B_I(\bar p, R)$, so that $\an(B_I(p,R))$ is always included in $\Omega(p,R)$.

\begin{figure}\label{hyp}
  \begin{center}
  \fbox{\includegraphics[height=7cm]{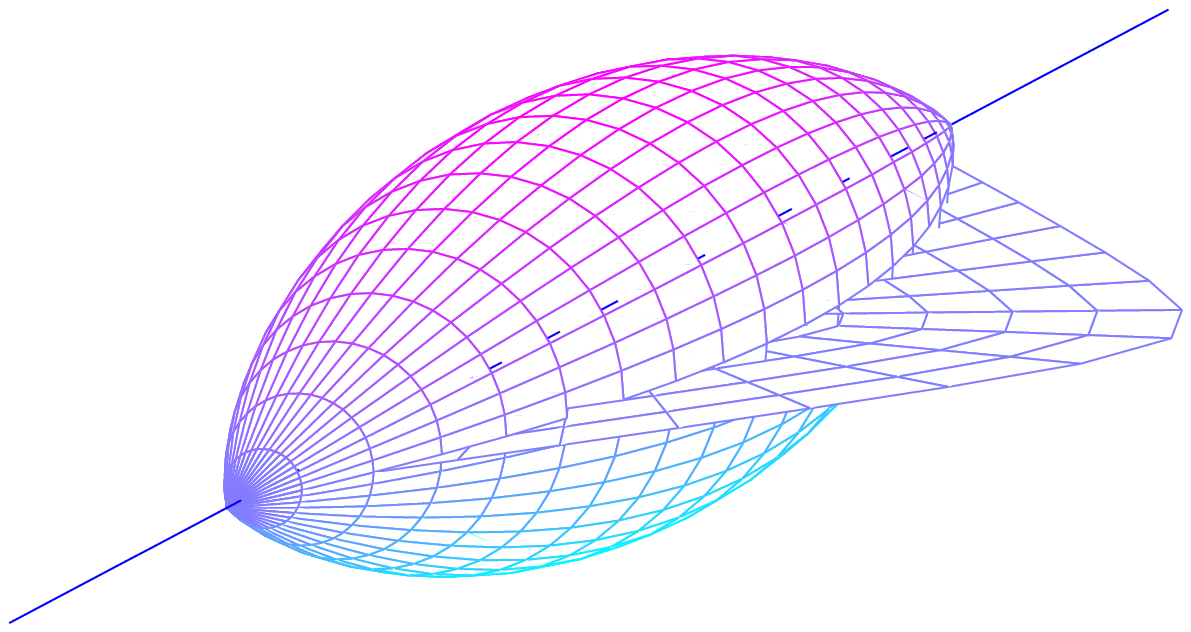}}
\end{center}
  \caption{A view in $\rr+i\rr+j\rr$ of $\an(\Sigma(p,R)) = \an(\Omega(p,R)) \cup \an(B_i(p,R))$, supposing $p \in L_i = \rr+i\rr$.}
\end{figure}

\begin{theorem}\label{anseries}
Choose $p \in \hh$, let $L_I$ be a complex line through $p$ and consider a regular power series $f(q) = \sum_{n \in \nn}(q-p)^{*n} a_n$ having $\sigma$-radius of convergence $R$. If $\Omega(p,R)$ is not empty then $f$ is quaternionic analytic at each point of
\begin{equation}\label{ansigma}
\an(\Sigma(p,R)) = \an(\Omega(p,R)) \cup \an(B_I(p,R)).
\end{equation}
The latter is a subset of $\Omega(p,R)$ and it is not open, unless $p \in \rr$.
\end{theorem}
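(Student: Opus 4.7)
The argument splits $\an(\Sigma(p,R)) = \an(\Omega(p,R)) \cup \an(B_I(p,R))$ into two pieces, to be treated separately, plus the inclusion in $\Omega(p,R)$ and the non-openness clause. The first piece is immediate: the hypothesis $\Omega(p,R)\neq \emptyset$ gives $R > |Im(p)|$, so the symmetric open set $\Omega(p,R)$ meets $\rr$, hence is a symmetric slice domain; by Theorem \ref{regularseriesregularity} the restriction of $f$ to $\Omega(p,R)$ is regular, and Corollary \ref{seidieci} directly yields quaternionic analyticity throughout $\an(\Omega(p,R))$.

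For a point $s \in \an(B_I(p,R))$, I would exploit that $p, s \in L_I$, so $f_I := f|_{B_I(p,R)}$ is a holomorphic $\hh$-valued function of one complex variable with Taylor expansion $f_I(z) = \sum_n (z-s)^n c_n$ converging on $B_I(s, r)$ where $r := R - |s-p|$. Since $\partial_\sigma B_I(p,R)$ contains the Euclidean circle $\partial B_I(p,R)$ and $\sigma$ restricts to Euclidean distance on $L_I$, we have $\sigma(s, \partial_\sigma B_I(p,R)) \leq r$; so the defining condition $2|Im(s)| < \sigma(s, \partial_\sigma B_I(p,R))$ yields $r > 2|Im(s)|$. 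The associated regular series $g(q) := \sum_n (q-s)^{*n} c_n$ then has $\sigma$-radius at least $r$ by Theorem \ref{regularseriesconvergence} and defines a regular function on the Euclidean open neighborhood $\Omega(s, r)$ of $s$ (open by Remark \ref{chiave} precisely because $r > 2|Im(s)|$) by Theorem \ref{regularseriesregularity}.

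To finish quaternionic analyticity at $s$, I would identify $g$ with $f$ on the Euclidean open neighborhood $U := \Omega(s, r) \cap \Omega(p, R)$ of $s$. This intersection is symmetric, and its $L_I$-slice $B_I(s,r) \cap B_I(\bar s, r) \cap B_I(p,R) \cap B_I(\bar p, R)$ is an intersection of open discs, hence convex and closed under $z \mapsto \bar z$; the segment from any $z$ to $\bar z$ forces it to meet $\rr$. Combined with axial symmetry, this makes $U$ a symmetric slice domain. On $U \cap L_I \subseteq B_I(s,r)$ both $g$ and $f$ coincide with the holomorphic Taylor series of $f_I$, so the identity principle (Theorem \ref{identity}) forces $g \equiv f$ on $U$; since $U \subseteq \Omega(p,R) \subseteq \Sigma(p,R)$, this is exactly the desired expansion.

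The closing claims follow from the description of $\an(B_I(p,R))$ given in the text. First, $\an(\Sigma(p,R)) \subseteq \Omega(p,R)$ combines $\an(\Omega(p,R)) \subseteq \Omega(p,R)$ with the inclusion $\an(B_I(p,R)) \subseteq \Omega(p,R) \cap L_I$, which is visible from the fact that both bounding arcs $\hyp(-I)$ and $\mathcal{K}(I)$ sit inside $B_I(p,R) \cap B_I(\bar p, R)$. For non-openness, comparison of equation (\ref{aperto}) with the equation defining $\mathcal{K}(I)$ shows they are identical iff $|Im(p)|=0$; so if $p \notin \rr$ there are points $s$ in the strip between $\hyp(I)$ and $\mathcal{K}(I)$ lying in $\an(B_I(p,R)) \setminus \an(\Omega(p,R)) \subseteq L_I$, and since off $L_I$ the set $\an(\Sigma(p,R))$ reduces to the open set $\an(\Omega(p,R))$ while $s$ is outside that open set, every Euclidean neighborhood of $s$ in $\hh$ contains off-$L_I$ points outside $\an(\Sigma(p,R))$. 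Conversely, $p \in \rr$ gives $\hyp(I) = \mathcal{K}(I)$ and $\an(\Sigma(p,R)) = \an(\Omega(p,R))$, which is open. The main technical obstacle in this plan is the verification that $U$ really is a symmetric slice domain, which is what lets the identity principle deliver $g \equiv f$; everything else is bookkeeping.
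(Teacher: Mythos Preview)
Your approach is essentially the paper's, and the analyticity argument is sound; but you have skipped one of the assertions of the theorem. The displayed formula $\an(\Sigma(p,R)) = \an(\Omega(p,R)) \cup \an(B_I(p,R))$ is not a definition: $\an(\Sigma(p,R))$ is determined by the general formula $\an(T)=\{s\in T:2|Im(s)|<\sigma(s,\partial_\sigma T)\}$, and the equality must be proved. The paper does this first, via the key observation that $\sigma(s,\partial_\sigma\Sigma(p,R))=R-\sigma(s,p)$ for every $s\in\Sigma(p,R)$; off $L_I$ this reads $R-\omega(s,p)=\sigma(s,\partial_\sigma\Omega(p,R))$, while on $L_I$ it reads $R-|s-p|=\sigma(s,\partial_\sigma B_I(p,R))$, and the inclusion $\an(\Omega(p,R))\cap L_I\subseteq\an(B_I(p,R))$ reassembles the two pieces into the union. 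You should add this computation.

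For the $\an(B_I(p,R))$ piece, your route is slightly more direct than the paper's: rather than invoking Theorem~\ref{expansion} to get the expansion on the smaller ball $\Sigma(s,R-\omega(s,p))$ and then enlarging, you build $g$ from the complex Taylor coefficients and match it to $f$ by the identity principle on $U=\Omega(s,r)\cap\Omega(p,R)$. This works, and in fact a short computation (compare the discs $B_I(s,r)\cap B_I(\bar s,r)$ and $B_I(p,R)\cap B_I(\bar p,R)$ using $r=R-|s-p|$) shows $\Omega(s,r)\subseteq\Omega(p,R)$, so $U=\Omega(s,r)$ and your slice-domain verification becomes immediate. One small ordering issue: your argument that $s\in U$ (needed so that $U$ is a neighborhood of $s$) relies on $s\in\Omega(p,R)$, i.e.\ on $\an(B_I(p,R))\subseteq\Omega(p,R)$, which you only justify in the final paragraph; it is cleaner to check $\omega(s,p)=\max\{|s-p|,|\bar s-p|\}<R$ directly from $|\bar s-p|\le 2|Im(s)|+|s-p|<r+(R-r)=R$ before invoking the identity principle.
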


\begin{proof}
We first prove equality (\ref{ansigma}). Let $\Sigma = \Sigma(p,R), \Omega = \Omega(p,R)$ and $B_I = B_I(p,R)$. For all $s \in \Sigma$, we remark that $\sigma(s,\partial_\sigma \Sigma) = R - \sigma(s,p)$. For all $s \in \Sigma \setminus L_I  = \Omega\setminus L_I$ we get $\sigma(s,\partial_\sigma \Sigma) = R -  \omega(s,p)$ and the latter coincides with $\sigma(s,\partial_\sigma \Omega)$. This proves that 
$$\an(\Omega) \setminus L_I = \an(\Sigma) \setminus L_I.$$ 
As for $s \in L_I$, we get $\sigma(s,\partial_\sigma \Sigma) = R -  |s - p| = \sigma(s, \partial_\sigma B_I)$ so that 
$$\an(\Sigma) \cap L_I = \an(B_I).$$ 
We conclude this first part of the proof recalling that $\an(B_I)$ always includes $\an(\Omega)\cap L_I$, so that
$$\left(\an(\Omega) \setminus L_I\right) \cup \an(B_I) = \an(\Omega)  \cup \an(B_I).$$
Now let us prove that $f$ is quaternionic analytic at all $s \in \an(\Sigma)$. We already noticed that it is quaternionic analytic at all $s \in \an(\Omega)$, due to corollary \ref{seidieci}. Finally, we prove the quaternionic analyticity at all $s \in \an(B_I) \subset \Omega$ as follows. By the properties of holomorphic complex functions, the restriction $f_I(z)$ expands as $\sum_{n \in \nn} (z-s)^{n} f^{(n)}(s)\frac{1}{n!}$ for all $z \in B_I(s, R-|s-p|)$. Hence the expansion $f(q) = \sum_{n \in \nn} (q-s)^{*n} f^{(n)}(s)\frac{1}{n!}$ at $s$ is valid not only in $\Sigma(s,R - \omega(s,p))$ as guaranteed by theorem \ref{expansion}, but in the whole 
$\Sigma(s, R - |s-p|)$. Since $s \in \an(B_I)$ implies $R - |s-p| > 2 |Im(s)|$, the $\sigma$-ball $\Sigma(s, R - |s-p|)$ is an Euclidean neighborhood of $s$ as desired.
\end{proof}

We conclude with an application of theorem \ref{anseries}.

\begin{prop}
For any domain $\Omega \subseteq \hh$ intersecting the real axis, the set $\an(\Omega)$ contains a symmetric neighborhood of $\Omega\cap \rr$.
\end{prop}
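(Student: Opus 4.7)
The plan is to construct an explicit symmetric open neighborhood of $\Omega \cap \rr$ contained in $\an(\Omega)$, built as a union of small Euclidean balls centered at real points. The central observation I will exploit is that whenever the center $x$ is real, the $\sigma$-distance coincides with the Euclidean distance: for any $p \in \hh$ one may pick $I_0 \in \s$ with $p \in L_{I_0}$, and since $x \in \rr \subset L_{I_0}$, definition \ref{sigma} gives $\sigma(p,x) = |p-x|$. Consequently the $\sigma$-ball $\Sigma(x,\rho)$ coincides with the Euclidean ball $B(x,\rho)$, and this ball is axially symmetric because the defining inequality $(a-x)^2 + b^2 < \rho^2$ for a point $a+Jb \in B(x,\rho)$ depends only on $a$ and $b$, not on $J \in \s$.

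For each $x \in \Omega \cap \rr$ I would set $r(x) := \sigma(x, \partial_\sigma \Omega)$. First I will verify that $r(x) > 0$: Euclidean openness of $\Omega$ yields some $\rho > 0$ with $B(x,\rho) \subseteq \Omega$, and the coincidence $B(x,\rho) = \Sigma(x,\rho)$ just noted then shows $\Sigma(x,\rho) \subseteq \Omega$, so $r(x) \geq \rho > 0$. The core claim is that $B(x, r(x)/3) \subseteq \an(\Omega)$. For a point $p$ in this ball, the triangle inequality for $\sigma$ together with the equality $\sigma(p,x)=|p-x|$ gives
$$\sigma(p,\partial_\sigma \Omega) \geq \sigma(x, \partial_\sigma \Omega) - \sigma(p,x) = r(x) - |p-x| > \tfrac{2}{3} r(x),$$
while the elementary bound $|Im(p)| \leq |p-x|$ valid for real $x$ gives $2|Im(p)| \leq 2|p-x| < \tfrac{2}{3} r(x)$. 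Combining these two estimates yields the defining inequality $2|Im(p)| < \sigma(p,\partial_\sigma \Omega)$ of $\an(\Omega)$.

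The proof then concludes by taking $U := \bigcup_{x \in \Omega \cap \rr} B(x, r(x)/3)$, which is Euclidean-open, axially symmetric as a union of symmetric sets, contains $\Omega \cap \rr$ by construction, and by the previous step lies inside $\an(\Omega)$. There is no substantial obstacle in this plan; the only point deserving care is the repeated switching between the $\sigma$- and Euclidean metrics, and the fact that positivity of $r(x)$ relies on $\Omega$ being $\sigma$-open (which follows from its Euclidean openness because the $\sigma$-topology is finer) together with the coincidence of the two types of ball at real centers. Everything else reduces to a routine application of the triangle inequality for $\sigma$.
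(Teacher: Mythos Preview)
Your argument is correct and follows the same overall plan as the paper: cover $\Omega\cap\rr$ by small symmetric pieces centered at each real point and check that each piece sits inside $\an(\Omega)$. The paper, for every $x\in\Omega\cap\rr$, chooses $\varepsilon_x>0$ with $B(x,\varepsilon_x)\subseteq\Omega$ and then invokes the description of $\an(B(x,\varepsilon_x))$ already obtained in Theorem~\ref{quatanalyticity}, observing that $\an(B(x,\varepsilon_x))\subseteq\an(\Omega)$. You instead take the smaller Euclidean ball $B(x,r(x)/3)$ and verify the defining inequality of $\an(\Omega)$ directly from the triangle inequality for $\sigma$; this is a bit more self-contained because it does not rely on the earlier hyperbola computation. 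One step you leave implicit is that $B(x,r(x)/3)\subseteq\Omega$, which is needed so that its points actually belong to $\an(\Omega)$ rather than merely satisfying the inequality $2|Im(p)|<\sigma(p,\partial_\sigma\Omega)$. This is easy to supply: for real $x$ one has $\sigma(x,\Omega^c)=\sigma(x,\partial_\sigma\Omega)$, since any $p\in\Omega^c$ shares a complex line $L_I$ with $x$ and the Euclidean segment from $x$ to $p$ in $L_I$ first meets $\Omega^c$ at a point of $\partial_\sigma\Omega$ no farther from $x$ than $p$; alternatively, simply replace $r(x)$ by your auxiliary $\rho$ (for which $B(x,\rho)\subseteq\Omega$ is given) throughout.
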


\begin{proof}
For all $x \in \Omega\cap \rr$, there exists an $\varepsilon_x >0$ such that $B(x,\varepsilon_x)\subseteq \Omega$. Then
$$\an(\Omega) = \{p \in \Omega : 2|Im(p)| < \sigma(p,\partial_\sigma \Omega)\} \supseteq $$
$$\supseteq  \{p \in B(x,\varepsilon_x) : 2|Im(p)| < \varepsilon_x  - |p|\} = \an(B(x,\varepsilon_x)).$$
The union $\an(B(x,\varepsilon_x))$ for $x \in \Omega\cap \rr$ is a symmetric slice domain containing $\Omega\cap \rr$.
\end{proof}


\bibliography{PowerSeries}

\bibliographystyle{abbrv}


\end{document}